\newtheorem{Theorem}{Theorem}[section]
\newtheorem{Definition}[Theorem]{Definition}
\newtheorem{Lemma}[Theorem]{Lemma}
\newtheorem{Proposition}[Theorem]{Proposition}
\numberwithin{equation}{section}
\newtheorem{theoremalph}{Theorem}
\begin{document}
   \newpage
   \title[Uniform Diophantine approximation]{Uniform Diophantine approximation related to beta-transformations}
 
   \author{Wanlou Wu}
   \address[Wanlou Wu]{School of Mathematics and Statistics, Jiangsu Normal University, Xuzhou, jiangsu, 221116, PR China}
   \email{wuwanlou@163.com}
  
  

\begin{abstract}
   For any $\beta>1$, let $T_\beta$ be the classical $\beta$-transformations. Fix $x_0\in[0,1]$ and a nonnegative real number $\hat{v}$, we compute the Hausdorff dimension of the set of real numbers $x\in[0,1]$ with the property that, for every sufficiently large integer $N$, there is an integer $n$ with $1\leq n\leq N$ such that the distance between $T_\beta^nx$ and $x_0$ is at most equal to $\beta^{-N\hat{v}}$. This work extends the result of Bugeaud and Liao \cite{YLiao2016} to every point  $x_0$ in unit interval. 
\end{abstract}

\maketitle  	
	
\section{Introduction and results}
   Diophantine approximation has been widely studied by mathematicians. In 1842, Dirichlet \cite{D1842} proved an illustrious theorem as follows.\\
   {\bf Dirichlet Theorem} For any two real numbers $\theta,~Q$ with $Q\geq 1$, there is an integer $n\in[1, Q]$ such that $$\lVert n \theta\rVert<Q^{-1},$$ where $\lVert\xi\rVert$ denotes the distance from $\xi$ to the nearest integer. 
  
   Dirichlet Theorem is called a \emph{uniform approximation theorem} in \cite[p.p. 2]{W12}. A weak form of Dirichlet Theorem, called an \emph{asymptotic approximation theorem} in \cite[p.p. 2]{W12}, which was often refered to as a corollary of Dirichlet Theorem in the litterature has already existed in the book of Legendre \cite[1808, p.p. 18-19]{L2009}: for any real number $\theta$, there are infinitely many $n\in\mathbb{N}$ such that $$\lVert n\theta\rVert<n^{-1}.$$ For the general case, Khintchine in 1924 \cite{K1924} showed that for a positive function $\psi:\mathbb{N}\rightarrow\mathbb{R}^+$, if $x\mapsto x\psi(x)$ is non-increasing, then the set $$\mathcal{L}_\psi:=\left\{\theta\in\mathbb{R}:\lVert n\theta\rVert<\psi(n),\text{ for infinitely many }n\in\mathbb{N}\right\}$$ has Lebesgue measure zero if the series $\sum\psi(n)$ converges and has full Lebesgue measure otherwise. In the case where the set has Lebesgue measure zero, it is natural to calculate the Hausdorff dimension of $\mathcal{L}_\psi$. The first result on the Hausdorff dimension of $\mathcal{L}_\psi$ dates back to Jarn\'{\i}k-Bosicovitch Theorem \cite{B34,J29}. It was shown that for any $\tau>1$, one has $${\rm dim}_H\left(\left\{\theta\in\mathbb{R}:\lVert n\theta\rVert<\dfrac{1}{n^\tau},\text{ for infinitely many }n\in\mathbb{N}\right\}\right)=\dfrac{2}{1+\tau},$$ where ${\rm dim}_H(\cdot)$ denotes the Hausdorff deminsion of a set.
   
   In analogy with the classical Diophantine approximation, Hill and Velani \cite{HV1995} studied the approximation properties of the orbits of a dynamical system and introduced the so called \emph{shrinking target problems}: for a measure preserving dynamical system $(M,\mu,T)$ with a metric $d$ and a positive function $\psi$, define the set of all \emph{$\psi$-well approximable} points by $x_0$ as $$\mathcal{L}(T,\psi,x_0):=\{x\in M:d(T^nx,x_0)<\psi(n),\text{ for infinitely many $n\in\mathbb{N}$}\},$$ what is the size (Lebesgue measure, Hausdorff dimension) of $\mathcal{L}(T,\psi,x_0)$? They studied the case where $T$ is an expanding rational map of the Riemann sphere $\overline{\mathbb{C}}=\mathbb{C}\cup\{\infty\}$. 
   
   In this papper, we are interested in the approximation properties of the orbits of $\beta$-transformations. The $\beta$-transformation $T_\beta~(\beta>1)$ on $[0,1)$ is defined by $$T_\beta(x):=\beta x-\lfloor\beta x\rfloor,$$ where $\lfloor\cdot\rfloor$ is the integer part function. For any positive function $\psi:\mathbb{N}\rightarrow\mathbb{R}^+$, define the set of \emph{$\psi$-well asymptotically approximable} points by $x_0$ as $$\mathcal{L}(\psi,x_0):=\left\{x\in[0,1]:\lvert T_\beta^n x-x_0\rvert<\psi(n),\text{ for infinitely many $n\in\mathbb{N}$}\right\}.$$ By \cite[Theorem 2A, B, C]{P67}, the set $\mathcal{L}(\psi,x_0)$ has Lebesgue measure zero if and only if the series $\sum\psi(n)$ converges. Shen and Wang \cite[Theorem 1.1]{SW2013} showed that for any real number $\beta>1$ and any point $x_0\in[0,1]$, one has $${\rm dim}_H\left(\mathcal{L}(\psi, x_0)\right)=\dfrac{1}{1+v},\quad\text{where $v:=\liminf\limits_{n\rightarrow\infty}\dfrac{-\log_\beta\psi(n)}{n}$}.$$ 
   
   Parallel to the asymptotic approximation theorem, it is also worth of studying the uniform approximation properties as in Dirichlet Theorem. The uniform Diophantine approximation related to $\beta$-transformations was studied by Bugeaud and Liao \cite{YLiao2016}. For $x\in[0,1)$, let $$v_\beta(x):=\sup\left\{v\geq 0:T^n_\beta x<(\beta^n)^{-v},\text{ for infinitely many $n\in\mathbb{N}$}\right\},$$ $$\hat{v}_\beta(x):=\sup\left\{v\geq 0:\forall~N\gg 1,~T^n_\beta x<(\beta^N)^{-v}\text{ has a solution $n\in[0,N]$}\right\}.$$ The exponents $v_\beta$ and $\hat{v}_\beta$ were introduced in \cite{AB10}(see also \cite[Ch.7]{B2012}). Bugeaud and Liao \cite{YLiao2016} proved the following theorem.\\{\bf Theorem BL} (\cite[Theorem 1.4]{YLiao2016}) \emph{For any $v\in(0,\infty)$ and any $\hat{v}\in(0,1)$, if $v<\hat{v}/(1-\hat{v})$, then the set $$\left\{x\in[0,1]:v_\beta(x)= v\right\}\cap\left\{x\in[0,1]:\hat{v}_\beta(x)\geq\hat{v}\right\}$$ is empty. Otherwise, $${\rm dim}_H\left(\{x\in[0,1]:v_\beta(x)= v\}\cap\{x\in[0,1]:\hat{v}_\beta(x)=\hat{v}\}\right)=\dfrac{v-\hat{v}-v\hat{v}}{(1+v)(v-\hat{v})}.$$}{\bf Theorem BL} can be considered as the special case where $x_0=0$. The aim of this paper is to study the Diophantine approximation sets in \cite{YLiao2016} for any fixed $x_0\in(0,1]$.
\begin{Definition}\label{defexp}
   Let $\beta>1$, fix $x_0\in[0,1]$. For any $x\in[0,1]$, denote by $\mathcal{V}_\beta(x,x_0)$ the supremum of the real numbers $v$ for which the equation $$\lvert T_\beta^n x-x_0\rvert<(\beta^n)^{-v}$$ has infinitely many solutions in integers $n\in\mathbb{N}$. Denote by $\hat{\mathcal{V}}_\beta(x,x_0)$ the supremum of the real numbers $\hat{v}$ for which, for every sufficiently large integer $N$, the equation $$\lvert T_\beta^n x-x_0\rvert<(\beta^N)^{-\hat{v}}$$ has a solution $n\in\mathbb{N}$ with $1\leq n\leq N$.
\end{Definition}      
   
   Our main results are the following Theorems \ref{A} and \ref{B}.
\begin{theoremalph}\label{A}
	Let $\beta>1$. For any $x_0\in[0,1]$, any $v\in(0,\infty)$ and any $\hat{v}\in(0,1)$, if $v<\hat{v}/(1-\hat{v})$, then the set $$\left\{x\in[0,1]:\mathcal{V}_\beta(x,x_0)=v\right\}\cap\left\{x\in[0,1]:\hat{\mathcal{V}}_\beta(x,x_0)\geq\hat{v}\right\}$$ is empty. Otherwise, the set $$\left\{x\in[0,1]:\mathcal{V}_\beta(x,x_0)= v\right\}\cap\left\{x\in[0,1]:\hat{\mathcal{V}}_\beta(x,x_0)=\hat{v}\right\}$$ has the Hausdorff dimension  $$\dfrac{v-\hat{v}-v\hat{v}}{(1+v)(v-\hat{v})}.$$
\end{theoremalph}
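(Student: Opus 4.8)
The plan is to recode membership in the intersection via the combinatorics of $\beta$-expansions and then run the usual two-sided (covering / mass distribution) dimension argument, the new input being a careful treatment of how the orbit tracks the $\beta$-expansion of an \emph{arbitrary} $x_0$. First I would reformulate the two exponents. For $x$ in the intersection, let $(n_k)_{k\ge 1}$ be the increasing sequence of ``record'' times at which $T_\beta^{n}x$ makes a new closest approach to $x_0$, and set $w_k:=-\log_\beta\lvert T_\beta^{n_k}x-x_0\rvert$, the depth of the approximation; up to boundary effects $w_k$ is the number of initial digits shared by the $\beta$-expansions of $T_\beta^{n_k}x$ and of $x_0$. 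Unwinding Definition \ref{defexp} one gets $\mathcal{V}_\beta(x,x_0)=\limsup_k w_k/n_k$ and $\hat{\mathcal{V}}_\beta(x,x_0)=\liminf_k w_k/n_{k+1}$. The elementary but crucial structural fact is that once $T_\beta^{n_k}x$ and $x_0$ agree to depth $w_k$, the forward orbit of $T_\beta^{n_k}x$ shadows that of $x_0$ for about $w_k$ steps and cannot record a strictly deeper approach before it stops shadowing; this yields $n_{k+1}\ge n_k+w_k$.

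Granting this, the emptiness statement is immediate. The uniform condition $\hat{\mathcal{V}}_\beta(x,x_0)\ge\hat v$ forces $w_k\ge \hat v\,n_{k+1}$ for all large $k$ (otherwise some large $N<n_{k+1}$ would admit no solution $n\le N$), and combining with $n_{k+1}\ge n_k+w_k$ gives $w_k(1-\hat v)\ge \hat v\,n_k$, i.e.\ $w_k/n_k\ge \hat v/(1-\hat v)$. Passing to the $\limsup$ yields $\mathcal{V}_\beta(x,x_0)\ge \hat v/(1-\hat v)$, so the set is empty as soon as $v<\hat v/(1-\hat v)$.

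For the upper bound on the dimension I would cover the set, for each large $k$, by the cylinders of order $n_k+w_k$ that force agreement with $x_0$ on the block $[n_k+1,n_k+w_k]$; the key is to cover \emph{immediately after} the forced block. There are about $\beta^{\,n_k-\sum_{j<k}w_j}$ such cylinders (the forced blocks contribute no branching), each of length about $\beta^{-(n_k+w_k)}$, so $\sum|I|^s\to 0$ once $s>(n_k-\sum_{j<k}w_j)/(n_k+w_k)$. Optimizing this ratio over admissible sequences $(n_k,w_k)$ realizing $v$ and $\hat v$ — the extremal profile being geometric with $w_k=v\,n_k$ and $n_{k+1}/n_k=v/\hat v$ — the partial sum $\sum_{j<k}w_j$ is asymptotically $v\hat v\,n_k/(v-\hat v)$, and the ratio tends to $(v-\hat v-v\hat v)/\big((1+v)(v-\hat v)\big)$; the factor $1+v$ is precisely the gain from covering just after the forced block rather than at order $n_{k+1}$.

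For the matching lower bound I would build a Cantor subset: fix the geometric profile above, prescribe all admissible digits on the free positions and the digits of $x_0$ on each block $[n_k+1,n_k+w_k]$, and spread a self-similar measure uniformly over the branches, so that the mass distribution principle returns the same exponent. Here lies the main obstacle and the genuine novelty over the case $x_0=0$: to keep the construction self-similar one needs the cylinders produced after appending a long block of $x_0$'s digits to be \emph{full} (of length exactly $\beta^{-\mathrm{order}}$), but the $\beta$-expansion of a general $x_0$ — especially a periodic or otherwise non-generic one — may pass through many non-full cylinders, and the abundance of full cylinders has to be controlled uniformly in $x_0$. I expect to resolve this by invoking the known density of full cylinders for $T_\beta$ (within nearby orders one always finds a full cylinder) and inserting, after each $x_0$-block, a short connecting word of length $o(n_k)$ that returns the construction to a full cylinder; since these connectors have negligible relative length, they perturb neither approximation exponent nor the Hölder exponent of the measure, and the formula is preserved for every $x_0\in[0,1]$.
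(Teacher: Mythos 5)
Your reformulation via record times and depths, the deduction of emptiness from $w_k\gtrsim\hat v\,n_{k+1}$ combined with $n_{k+1}\ge n_k+w_k$, and the covering at order $n_k+w_k$ by roughly $\beta^{\,n_k-\sum_{j<k}w_j}$ cylinders are exactly the paper's argument (its relations (\ref{E2})--(\ref{E5}) and the block-counting that follows), so those parts match. One caveat, which affects the paper equally and which I only flag: the shadowing fact $n_{k+1}\ge n_k+w_k$ is not automatic for a general target, because a strictly deeper record can begin \emph{inside} the forced block when $d_\beta(x_0)$ has a self-overlap whose periodicity breaks just beyond the current depth; for instance if $d_\beta(x_0)=(1,0,1,0,0,0,\ldots)$ and the digits of $x$ after place $n_k$ are $(1,0,1,0,1,0,0,0,\ldots)$, there is a record of depth $4$ at $n_k$ and a deeper one already at $n_k+2$. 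Making this step rigorous requires a combinatorial (Fine--Wilf type) argument about overlaps of $d_\beta(x_0)$.

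The genuine gap is in your lower bound, precisely at the step you identify as the crux. The ``density of full cylinders'' you invoke does not exist in the nested sense you need. A full cylinder of order $m$ has length exactly $\beta^{-m}$, so any full sub-cylinder of $I_n(\omega)$ has order at least $-\log_\beta\lvert I_n(\omega)\rvert$; by Proposition \ref{fullc}(3) the first full refinement occurs essentially at that order, i.e.\ your connecting word must have length about $-\log_\beta\lvert I_n(\omega)\rvert-n$. This is not $o(n)$ in general: take $\beta$ whose expansion of $1$ is $d_\beta(1)=(1,0,1,0,0,1,0,0,0,0,1,\ldots)$, with zero-runs of length comparable to their position (such a sequence is self-admissible, hence equals $d_\beta(1)$ for some $\beta$ by Theorem \ref{adone}), and take $x_0=1$, or any $x_0$ whose expansion contains arbitrarily long prefixes of $d_\beta(1)$; then $\lvert I_n(x_0)\rvert\asymp\beta^{-2n}$ along a subsequence, so after an $x_0$-block of length $w_k\approx v\,n_k$ the required connector has length comparable to $w_k$, not $o(n_k)$. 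Such connectors carry no entropy yet occupy a positive proportion of length, so the mass distribution principle returns a strictly smaller exponent, and they also destroy the calibration $\lim_k(m_k-n_k)/n_{k+1}=\hat v$. (The known density theorem for full cylinders, due to Bugeaud and Wang, concerns spatially consecutive cylinders of one fixed order, not nested refinements, so it cannot be invoked here.) The paper resolves this by a different mechanism: the free blocks are drawn from the simple-Parry approximant $\Sigma_{\beta_N}$, buffer words $0^N10^N$ are inserted at the transition digits, Lemma \ref{length} then gives the uniform two-sided bounds $\beta^{-(n+N)}\le\lvert I_n\rvert\le\beta^{-n}$ for all cylinders used, and the target dimension is recovered as $N\to\infty$ through the factor $\log_\beta\beta_N$. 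Your construction would need to be rebuilt on this or an equivalent device; as written, it fails for pairs $(\beta,x_0)$ of the above type.
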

     
\begin{theoremalph}\label{B}
   Let $\beta>1$, the set $$\left\{x\in[0,1]:\hat{\mathcal{V}}_\beta(x,x_0)=0\right\}$$ is of full Lebesgue measure. If $0<\hat{v}\leq1$, then $${\rm dim}_H\left( \left\{x\in[0,1]:\hat{\mathcal{V}}_\beta(x,x_0)=\hat{v}\right\}\right)=\left(\dfrac{1-\hat{v}}{1+\hat{v}}\right)^2.$$ Otherwise, the set $$\left\{x\in[0,1]:\hat{\mathcal{V}}_\beta(x,x_0)>1\right\}$$ is countable. 
\end{theoremalph}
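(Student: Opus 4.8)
The plan is to reduce every assertion of Theorem \ref{B} to Theorem \ref{A} together with one elementary optimisation, and to treat the regime $\hat{v}>1$ by a separate combinatorial argument. First I would record the following computation. For fixed $\hat{v}\in(0,1)$ consider
$$g(v):=\dfrac{v-\hat{v}-v\hat{v}}{(1+v)(v-\hat{v})},\qquad v\geq \dfrac{\hat{v}}{1-\hat{v}},$$
which is the dimension supplied by Theorem \ref{A}. A direct differentiation shows that $g$ vanishes at the endpoint $v=\hat{v}/(1-\hat{v})$, tends to $0$ as $v\to\infty$, and has a unique interior critical point at $v^\ast:=2\hat{v}/(1-\hat{v})$, where $g(v^\ast)=\left(\frac{1-\hat{v}}{1+\hat{v}}\right)^2$. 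Thus $\max_{v\geq\hat{v}/(1-\hat{v})}g(v)=\left(\frac{1-\hat{v}}{1+\hat{v}}\right)^2$, attained at $v=v^\ast$.

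Next I would establish the dimension formula for $0<\hat{v}<1$. Since $\hat{\mathcal{V}}_\beta\leq\mathcal{V}_\beta$ and, by the emptiness clause of Theorem \ref{A}, every $x$ with $\hat{\mathcal{V}}_\beta(x,x_0)=\hat{v}$ satisfies $\mathcal{V}_\beta(x,x_0)\geq\hat{v}/(1-\hat{v})$, we may decompose
$$\{x:\hat{\mathcal{V}}_\beta(x,x_0)=\hat{v}\}=\bigcup_{v\geq\hat{v}/(1-\hat{v})}\big(\{x:\mathcal{V}_\beta(x,x_0)=v\}\cap\{x:\hat{\mathcal{V}}_\beta(x,x_0)=\hat{v}\}\big).$$
The lower bound is immediate: taking the single value $v=v^\ast$, Theorem \ref{A} exhibits a subset of Hausdorff dimension $g(v^\ast)=\left(\frac{1-\hat{v}}{1+\hat{v}}\right)^2$. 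For the upper bound I would invoke the covering estimates underlying the proof of Theorem \ref{A}; these are uniform in $v$ on compact ranges, so that for any $0<a<b$ the set $\{x:\mathcal{V}_\beta\in[a,b),\ \hat{\mathcal{V}}_\beta=\hat{v}\}$ has dimension at most $\max_{v\in[a,b]}g(v)$. Partitioning $[\hat{v}/(1-\hat{v}),\infty]$ into countably many such ranges and using the countable stability of Hausdorff dimension yields the upper bound $\sup_v g(v)=\left(\frac{1-\hat{v}}{1+\hat{v}}\right)^2$. The boundary case $\hat{v}=1$ is then handled by squeezing: $\{x:\hat{\mathcal{V}}_\beta=1\}\subseteq\{x:\hat{\mathcal{V}}_\beta\geq\hat{v}'\}$ for every $\hat{v}'<1$, whence its dimension is at most $\left(\frac{1-\hat{v}'}{1+\hat{v}'}\right)^2\to0$, giving dimension $0$ as the formula predicts. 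The full-measure statement then falls out for free: for each $\hat{v}>0$ the set $\{x:\hat{\mathcal{V}}_\beta\geq\hat{v}\}$ has Hausdorff dimension at most $\left(\frac{1-\hat{v}}{1+\hat{v}}\right)^2<1$, hence Lebesgue measure zero, so $\{x:\hat{\mathcal{V}}_\beta>0\}=\bigcup_{k\geq1}\{x:\hat{\mathcal{V}}_\beta\geq1/k\}$ is a countable union of null sets and $\{x:\hat{\mathcal{V}}_\beta(x,x_0)=0\}$ has full measure.

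Finally, the countability of $\{x:\hat{\mathcal{V}}_\beta(x,x_0)>1\}$, which I expect to be the main obstacle, I would prove by a combinatorial overlap argument on $\beta$-expansions. Fix $\hat{v}>1$ and suppose $\hat{\mathcal{V}}_\beta(x,x_0)>\hat{v}$; then for every large $N$ there is $n_N\in[1,N]$ with $|T_\beta^{n_N}x-x_0|<\beta^{-N\hat{v}}$, which forces the digits of $x$ in positions $[n_N+1,\,n_N+N\hat{v}]$ to agree with the first $\lfloor N\hat{v}\rfloor$ digits of $x_0$. If the times $n_N$ remain bounded, some $n$ recurs for infinitely many $N$, giving $T_\beta^n x=x_0$, so $x$ lies in the countable set $\bigcup_{n\geq1}T_\beta^{-n}\{x_0\}$. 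If $n_N\to\infty$, then since each matching block has length exceeding $N$ while starting within the first $N+1$ positions, the blocks attached to consecutive values of $N$ are forced to overlap; their coincidence propagates and pins the tail of the expansion of $x$ down to a shift of the expansion of $x_0$ (or, when the expansion of $x_0$ is eventually periodic, to an eventually periodic word). In every case $x$ belongs to a countable set, which proves the claim. The delicate point is to control this overlap uniformly, in particular to rule out that slowly shifting starting positions could produce uncountably many admissible tails; this requires care with the boundary behaviour of $T_\beta$ and with the quasiperiodicity constraints the matching imposes on the expansion of $x_0$.
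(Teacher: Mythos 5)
Your handling of the range $0<\hat{v}\leq 1$ is essentially the paper's own argument, and that part is sound: the paper bounds the dimension of $\{x: v\leq \mathcal{V}_\beta(x,x_0)<v+1/L\}\cap\{x:\hat{\mathcal{V}}_\beta(x,x_0)=\hat{v}\}$ by $g(v)+\hat{v}^2/(L(1-\hat{v}))$, which is exactly your ``uniform covering estimate on compact ranges,'' and then optimizes $g$; your computation that the maximum of $g$ is $\left(\frac{1-\hat{v}}{1+\hat{v}}\right)^2$ at $v^\ast=2\hat{v}/(1-\hat{v})$ is correct, and the lower bound via Theorem \ref{A} at $v^\ast$ is what the paper does. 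Two caveats. First, your full-measure argument and your $\hat{v}=1$ squeeze both require dimension bounds for the sets $\{x:\hat{\mathcal{V}}_\beta\geq\hat{v}\}$, whereas your decomposition only yields bounds for the level sets $\{\hat{\mathcal{V}}_\beta=\hat{v}\}$; to upgrade, one must note that the covering argument only uses the one-sided inequality $m_k-n_k\geq(\hat{v}-\varepsilon)n_{k+1}$ coming from $\hat{\mathcal{V}}_\beta\geq\hat{v}$, together with monotonicity of $g$ in $\hat{v}$, and one must also dispose of the points with $\mathcal{V}_\beta=\infty$, which escape your countable partition over finite $v$. Second, this whole route is much heavier than the paper's proof of the full-measure claim: since $\hat{\mathcal{V}}_\beta\leq\mathcal{V}_\beta$, it suffices to show $\mathcal{V}_\beta(x,x_0)=0$ for a.e.\ $x$, which the paper gets at once from the convergence half of Philipp's theorem (its Lemma 3.1, a Borel--Cantelli statement); no dimension theory is needed.

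The genuine gap is exactly where you predicted it, in the countability of $\{x:\hat{\mathcal{V}}_\beta(x,x_0)>1\}$, and it is twofold. (i) Your starting premise --- that $\lvert T_\beta^{n}x-x_0\rvert<\beta^{-N\hat{v}}$ forces the digits of $T_\beta^n x$ to agree with the first $\lfloor N\hat{v}\rfloor$ digits of $x_0$ --- is false for $\beta$-expansions: the order-$m$ cylinder containing $x_0$ can be far shorter than $\beta^{-m}$, so proximity does not pin digits. For instance with $\beta=2$, $x_0=1/2$, $d_\beta(x_0)=10^\infty$, points just below $1/2$ have expansions beginning $01^k$ and share no prefix with $d_\beta(x_0)$; a correct argument must account for approximation from both sides of $x_0$. (ii) The unbounded-$n_N$ case, which carries the whole claim, is not proved: you describe an expected conclusion (``the overlaps pin the tail'') and explicitly defer the uniformity problem that makes it nontrivial. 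The paper closes precisely this step in its Lemmas 3.2 and 3.3, and does so with a quantitative bound rather than a structural classification of tails: for $x$ outside the countable set $\bigcup_{n\geq1}\{x:d_\beta(x)=(\omega,d_\beta(x_0))\}$, the maximal blocks of digits of $x$ matching $d_\beta(x_0)$ can be organized into disjoint blocks $(n_k,m_k)$ with $m_k<n_{k+1}$ and $m_k-n_k$ non-decreasing, whence $\hat{\mathcal{V}}_\beta(x,x_0)\leq\liminf_{k\to\infty}(m_k-n_k)/n_{k+1}\leq 1$. In other words, off that countable set the uniform exponent can never exceed $1$, so $\{\hat{\mathcal{V}}_\beta>1\}=\{\hat{\mathcal{V}}_\beta=\infty\}$ coincides with the countable preimage set; the scenario you try to analyze (unbounded starting positions with exponent $>1$) is in fact vacuous. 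Incidentally, even if your anticipated conclusion in that case were established --- the tail of $d_\beta(x)$ being a shifted copy of $d_\beta(x_0)$ --- it would just mean $T_\beta^{n}x=T_\beta^{j}x_0$ for some $n,j$, again a countable set, so your target is consistent; but as it stands the decisive step is missing, and the digit-forcing premise it rests on needs repair.
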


   Persson and Schmeling \cite{PS08} gave another point of view, by letting $\beta$ vary and considering the $\beta$-expansions of $1$. They \cite[Theorem 14]{PS08} showed that for any $1<\beta_0<\beta_1<2$ and any $v\geq0$, one has $${\rm dim}_H\left( \left\{\beta\in(\beta_0,\beta_1):\mathcal{V}_\beta(1,x_0)=v\right\}\right)=\dfrac{1}{1+v}.$$ In \cite{LPWW}, the assumption $\beta_1<2$ is removed. In the same way as {\bf Theorem BL}, Bugeaud and Liao \cite{YLiao2016} also proved that for any $v\in(0,\infty)$ and any $\hat{v}\in(0,1)$, if $v<\hat{v}/(1-\hat{v})$, then $$\left\{\beta>1:v_\beta(1)=v\right\}\cap\left\{\beta>1:\hat{v}_\beta(1)\geq\hat{v}\right\}=\emptyset,$$ Otherwise, $${\rm dim}_H\left(\{\beta>1:v_\beta(1)= v\}\cap\left\{\beta>1:\hat{v}_\beta(1)=\hat{v}\right\}\right)=\dfrac{v-\hat{v}-v\hat{v}}{(1+v)(v-\hat{v})}.$$ When considering the exponents $\mathcal{V}_\beta(1,x_0)$ and $\hat{\mathcal{V}}_\beta(1,x_0)$, we obtain the following Theorems \ref{C} and \ref{D}.  
\begin{theoremalph}\label{C}
   For any $x_0\in[0,1]$, any $v\in(0,\infty)$ and any $\hat{v}\in(0,1)$, if $v<\hat{v}/(1-\hat{v})$, then the set $$\left\{\beta>1:\mathcal{V}_\beta(1,x_0)=v\right\}\cap\left\{\beta>1:\hat{\mathcal{V}}_\beta(1,x_0)\geq\hat{v}\right\}$$ is empty. Otherwise, $${\rm dim}_H\left(\{\beta>1:\mathcal{V}_\beta(1,x_0)= v\}\cap\left\{\beta>1:\hat{\mathcal{V}}_\beta(1,x_0)=\hat{v}\right\}\right)=\dfrac{v-\hat{v}-v\hat{v}}{(1+v)(v-\hat{v})}.$$    
\end{theoremalph}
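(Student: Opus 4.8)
The plan is to run the same multifractal analysis as in the proof of Theorem~\ref{A}, but in the parameter space of $\beta$ rather than in the state space of $x$, exploiting the classical correspondence between a real number $\beta>1$ and the greedy $\beta$-expansion of $1$. Recall from Rényi and Parry that the map $\beta\mapsto(\varepsilon_i(1,\beta))_{i\ge1}$, where $1=\sum_{i\ge1}\varepsilon_i(1,\beta)\beta^{-i}$, is increasing and identifies $(1,\infty)$ with the set of self-admissible sequences (those $\varepsilon$ with $\sigma^k\varepsilon\preceq\varepsilon$ for all $k$). For an admissible word $w=(\varepsilon_1,\dots,\varepsilon_n)$ the set $I_n(w)$ of parameters $\beta$ whose expansion of $1$ begins with $w$ is an interval, and the decisive transfer estimate, which I would take from the framework of Persson--Schmeling \cite{PS08} and Li--Persson--Wang--Wu \cite{LPWW}, is that $|I_n(w)|$ is comparable, up to a subexponential factor, to $\beta^{-n}$. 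This matches the scale $\beta^{-n}$ governing the cylinders in the fixed-$\beta$ problem, so the covering and mass-distribution computations carry over essentially verbatim and necessarily produce the same dimension exponent as in Theorem~\ref{A}.

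For the emptiness statement I would argue exactly as in the fixed-$\beta$ case: the relation $\mathcal{V}_\beta(1,x_0)\ge\hat{\mathcal{V}}_\beta(1,x_0)/(1-\hat{\mathcal{V}}_\beta(1,x_0))$ is a purely dynamical inequality between the two exponents attached to a single orbit $(T_\beta^n1)_n$, obtained by combining a uniform return at scale $\beta^{-N\hat v}$ with the expansion rate $\beta^n$ of $T_\beta$. Since this derivation does not use that the orbit is that of a varying point, it applies unchanged to the orbit of $1$ under a varying $\beta$, and, as $t\mapsto t/(1-t)$ is increasing, $\hat{\mathcal{V}}_\beta(1,x_0)\ge\hat v$ then forces $v<\hat v/(1-\hat v)$ to yield an empty intersection. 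The upper bound for the dimension is a covering argument: I would enumerate the admissible prefixes of the expansion of $1$ that are compatible with the prescribed uniform exponent $\hat v$ (forcing a return to the $\beta^{-N\hat v}$-neighbourhood of $x_0$ before each large time $N$) and with the asymptotic exponent $v$, count them, and cover the target set by the corresponding intervals $I_n(w)$ of length $\asymp\beta^{-n}$; the resulting estimate reproduces $\dfrac{v-\hat v-v\hat v}{(1+v)(v-\hat v)}$.

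The core work is the lower bound, where I would build a Cantor subset of parameters by prescribing, block by block, the expansion of $1$. Between prescribed \emph{return blocks}, which must drive $T_\beta^n1$ into the $\beta^{-N\hat v}$-neighbourhood of $x_0$ at the times dictated by $\hat v$ and which symbolically amount to matching a suitable prefix of the $\beta$-expansion of $x_0$, I would insert \emph{free blocks} carrying the entropy needed to reach the target dimension, taking care that the asymptotic exponent equals exactly $v$ and not more. A Frostman measure spread over the surviving parameters, together with the length estimate $|I_n(w)|\asymp\beta^{-n}$, would then bound the dimension from below by the same exponent. The main obstacle, absent in Theorem~\ref{A}, is that every prescribed word must itself be self-admissible: in the fixed-$\beta$ state space any admissible cylinder is available, whereas here the string we are constructing \emph{is} the expansion of $1$ and is therefore constrained by the Parry condition $\sigma^k\varepsilon\preceq\varepsilon$ at every stage. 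Reconciling (i) self-admissibility, (ii) the prescribed approach to $x_0$ at the correct scales, and (iii) the preservation of full entropy in the free blocks, and doing so uniformly in $x_0$, whose $\beta$-expansion itself varies with $\beta$, is the delicate point; I would handle it by choosing the free blocks to lie strictly below the Parry bound, so that admissibility is never violated while only a subexponential amount of entropy is sacrificed.
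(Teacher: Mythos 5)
Your plan is genuinely different from the paper's proof, so let me first record what the paper actually does: it performs almost no new analysis in parameter space. For the upper bound it restricts to a window $(\beta_0,\beta_1)$, uses Parry's correspondence (Theorem \ref{adone}) to identify each parameter with the point of $[0,1]$ whose $\beta_1$-expansion is the expansion of $1$ in that parameter, observes that the resulting bijection $\rho_{\beta_1}$, restricted to the relevant set, is H\"older with exponent $\log\beta_0/\log\beta_1$, and then simply invokes Theorem \ref{A} and lets $\beta_1\to\beta_0$; no counting of self-admissible words is redone. For the lower bound it constructs nothing: it quotes the parameter-space Cantor construction of Bugeaud and Liao (\cite[Theorem 5.1]{YLiao2016}, restated as Theorem \ref{lowb}) together with \cite[Theorem 14]{PS08}, and lets $\beta_2\to\beta_1$. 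What you propose is in effect a self-contained re-proof of Theorem \ref{lowb}, extended to a general target $x_0$. Your emptiness argument is correct, and your covering upper bound is workable: self-admissible words are a subset of the admissible words already counted in the proof of Theorem \ref{A}, and the one-sided estimate $\lvert I_n(w)\rvert\le C\beta^{-n}$ for parameter cylinders (which follows from the mean value theorem applied to $\beta\mapsto\sum_{i\le n}\varepsilon_i\beta^{-i}$) holds for every self-admissible word, which is all a covering needs.

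The gap is in the lower bound, which you yourself call the core of the work. First, the ``decisive transfer estimate'' $\lvert I_n(w)\rvert\asymp\beta^{-n}$ is not available as a black box from \cite{PS08} or \cite{LPWW}: it is false for general self-admissible words. For example, for $w=(1,0^{k-1},1,0^{k-1},1)$ of length $n=2k+1$, self-admissibility forces every continuation of $w$ to begin with $0^{k-1}$, so the values $T_\beta^n1$ range over an interval of size $O(\beta^{-k})$ only, and the parameter cylinder has length of order $\beta^{-n-k}\approx\beta^{-3n/2}$. A lower bound of order $\beta^{-n}$ holds only for words that admit essentially free continuations (e.g.\ words ending in a long zero block), and guaranteeing this for every prefix produced by the construction is precisely the technical content of Theorem \ref{lowb}; your device of keeping free blocks strictly below the Parry bound points in this direction but is not itself that estimate, and it must be checked against the return blocks, not just the free ones. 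Second, and more seriously, the circularity you flag at the end --- the return blocks must reproduce prefixes of $d_\beta(x_0)$, which is not a fixed sequence but varies with the very parameter being constructed, and whose digits can be as large as the leading digit you would want to reserve as a self-admissibility barrier --- is not addressed by that device at all. Closing it requires an actual mechanism, for instance monotonicity and continuity of $\beta\mapsto T_\beta^n1$ on each parameter cylinder together with an intermediate-value selection of subcylinders landing in $(x_0-\beta^{-m},x_0+\beta^{-m})$, in the spirit of \cite{LPWW}; alternatively one sidesteps the issue entirely, as the paper does, by the H\"older transfer to Theorem \ref{A} for the upper bound and the citation of Theorem \ref{lowb} for the lower bound.
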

  
\begin{theoremalph}\label{D}
   For any $x_0\in[0,1]$ and any $\hat{v}\in[0,1]$, one has $${\rm dim}_H\left( \left\{\beta>1:\hat{\mathcal{V}}_\beta(1,x_0)\geq\hat{v}\right\}\right)={\rm dim}_H\left( \left\{\beta>1:\hat{\mathcal{V}}_\beta(1,x_0)=\hat{v}\right\}\right)$$ and $${\rm dim}_H \left(\{\beta>1:\hat{\mathcal{V}}_\beta(1,x_0)=\hat{v}\}\right)=\left(\dfrac{1-\hat{v}}{1+\hat{v}}\right)^2.$$ 	
\end{theoremalph}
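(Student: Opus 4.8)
The plan is to deduce Theorem \ref{D} by sandwiching the level set $\{\beta>1:\hat{\mathcal{V}}_\beta(1,x_0)=\hat{v}\}$ between two sets whose dimensions are already under control, so that both asserted equalities fall out simultaneously. Concretely, it suffices to establish the two inequalities
\begin{equation*}
{\rm dim}_H\{\beta>1:\hat{\mathcal{V}}_\beta(1,x_0)=\hat{v}\}\geq\left(\frac{1-\hat{v}}{1+\hat{v}}\right)^2,\qquad {\rm dim}_H\{\beta>1:\hat{\mathcal{V}}_\beta(1,x_0)\geq\hat{v}\}\leq\left(\frac{1-\hat{v}}{1+\hat{v}}\right)^2.
\end{equation*}
Since $\{\hat{\mathcal{V}}_\beta(1,x_0)=\hat{v}\}\subseteq\{\hat{\mathcal{V}}_\beta(1,x_0)\geq\hat{v}\}$, these two bounds force every dimension trapped between them to equal $\left(\frac{1-\hat{v}}{1+\hat{v}}\right)^2$, which is exactly both displayed equalities of the theorem at once.

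For the lower bound I would simply invoke Theorem \ref{C}. Fixing $\hat{v}\in(0,1)$ and maximizing $\frac{v-\hat{v}-v\hat{v}}{(1+v)(v-\hat{v})}$ over the admissible range $v\geq\hat{v}/(1-\hat{v})$, a direct computation (setting $u=1/v$ and completing the square) shows the maximum is attained at $v^*=\frac{2\hat{v}}{1-\hat{v}}$ and equals $\left(\frac{1-\hat{v}}{1+\hat{v}}\right)^2$. Since $v^*>\hat{v}/(1-\hat{v})$ for every $\hat{v}>0$, Theorem \ref{C} applies and yields
\begin{equation*}
{\rm dim}_H\left(\{\beta>1:\mathcal{V}_\beta(1,x_0)=v^*\}\cap\{\beta>1:\hat{\mathcal{V}}_\beta(1,x_0)=\hat{v}\}\right)=\left(\frac{1-\hat{v}}{1+\hat{v}}\right)^2.
\end{equation*}
As this set is contained in $\{\hat{\mathcal{V}}_\beta(1,x_0)=\hat{v}\}$, the lower bound follows for $\hat{v}\in(0,1)$; the endpoints $\hat{v}=0$ (full dimension) and $\hat{v}=1$ (dimension $0$) are then treated directly.

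The substance of the argument is therefore the upper bound on the larger set $\{\hat{\mathcal{V}}_\beta(1,x_0)\geq\hat{v}\}$, which I would obtain by a covering argument in parameter space built on the dictionary between $\beta$ and the greedy $\beta$-expansion of $1$. Writing $1=\sum_{k\geq1}\epsilon_k(\beta)\beta^{-k}$, one has $T_\beta^n(1)=\sum_{k\geq1}\epsilon_{n+k}(\beta)\beta^{-k}$, so the condition $\hat{\mathcal{V}}_\beta(1,x_0)\geq\hat{v}$ means that for every large $N$ some $n\leq N$ makes the tail of the expansion of $1$ beyond position $n$ approximate $x_0$ to precision $\beta^{-N\hat{v}'}$ for any fixed $\hat{v}'<\hat{v}$; this forces a block of roughly $N\hat{v}'$ digits of the expansion of $1$ to agree with those of $x_0$. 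Restricting $\beta$ to a compact subinterval of $(1,\infty)$ to make all constants uniform (and afterwards taking a countable union, which leaves the dimension unchanged), I would cover the set by the fundamental parameter intervals on which the expansion of $1$ has a prescribed admissible prefix; by the regularity of $\beta$-expansions of $1$ in \cite{PS08} a prefix of length $m$ corresponds to a parameter interval of length $\asymp\beta^{-m}$, which in turn reflects that $\beta\mapsto T_\beta^m(1)$ has derivative of order $\beta^m$, the very scaling that governs cylinder lengths in phase space. Counting the admissible prefixes compatible with the forced blocks and balancing the proportion of free against prescribed digits reproduces exactly the optimization underlying Theorem \ref{B} and yields the exponent $\left(\frac{1-\hat{v}}{1+\hat{v}}\right)^2$.

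The main obstacle is the self-admissibility (Parry) constraint intrinsic to the case $x=1$: the sequence $(\epsilon_k(\beta))_k$ must itself be the greedy expansion of $1$, so each of its shifts is lexicographically dominated by the whole sequence, and, unlike the fixed-$\beta$, varying-$x$ situation of Theorem \ref{B}, one cannot fill cylinders freely. I expect the bulk of the technical work to lie in verifying that this restriction does not inflate the covering count when one passes from prescribed digit patterns to genuine expansions of $1$, and in making the comparison ``prefix of length $m\leftrightarrow$ parameter interval of length $\asymp\beta^{-m}$'' uniform over the chosen compact range of $\beta$. The distribution results for $\beta$-expansions of $1$ in \cite{PS08} are precisely the tool designed to handle this, and they are what force the parameter-space dimension to coincide with the phase-space dimension of Theorem \ref{B}.
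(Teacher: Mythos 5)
Your overall architecture is sound and half of it coincides with the paper's. The sandwich (lower bound on $\{\beta:\hat{\mathcal{V}}_\beta(1,x_0)=\hat{v}\}$, upper bound on the larger set $\{\beta:\hat{\mathcal{V}}_\beta(1,x_0)\geq\hat{v}\}$, both equal to $\bigl(\tfrac{1-\hat{v}}{1+\hat{v}}\bigr)^2$) is exactly the right way to get both displayed equalities at once, and your lower bound is correct and is essentially what the paper intends: the paper's one-line proof (``follow the proof of Theorem \ref{B}'') amounts to optimizing $\tfrac{v-\hat{v}-v\hat{v}}{(1+v)(v-\hat{v})}$ over $v\geq\hat{v}/(1-\hat{v})$, with maximum $\bigl(\tfrac{1-\hat{v}}{1+\hat{v}}\bigr)^2$ at $v^\ast=2\hat{v}/(1-\hat{v})$, and feeding the optimal $v^\ast$ into the Theorem \ref{C} machinery (Theorem \ref{lowb} plus \cite{PS08}) for the lower bound.

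The gap is in your upper bound, and it is a genuine one. The paper never performs a covering argument directly in parameter space; for the upper bound it reuses the Section 4 transfer: the correspondence $\rho_{\beta_1}\circ\pi_{\beta_1}$ between self-admissible sequences and parameters, which is H\"older of exponent $\log\beta_0/\log\beta_1$ on the relevant sets, maps the parameter-space problem into the phase-space problem, where the Theorem \ref{A}/\ref{B} upper bound is already proved; letting $\beta_1\to\beta_0$ and taking a countable union of intervals $(\beta_0,\beta_1)$ finishes. Your route instead redoes the count among self-admissible prefixes, and you explicitly defer its crux: you ``expect'' that the Parry self-admissibility constraint does not inflate the covering count, but nothing in your sketch verifies this, and this verification is the entire content of the bound. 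Moreover, your counting step asserts that the approximation condition forces a block of $\approx N\hat{v}$ digits of $d_\beta(1)$ ``to agree with those of $x_0$''; but in parameter space the target expansion $d_\beta(x_0)$ is itself a function of $\beta$, so across a single fundamental interval the forced block is not a fixed word. For $x_0=0$ (the Bugeaud--Liao case \cite{YLiao2016}) this is vacuous since $d_\beta(0)$ is identically zero, but for general $x_0\in(0,1]$ --- the very point of this paper --- the multiplicity coming from the moving target must be controlled (it can be done, the number of possible prefixes of $d_\beta(x_0)$ over such an interval grows only polynomially, but this requires an argument you do not give). The paper's transfer-to-phase-space route sidesteps both issues, which is precisely why it is structured around Theorems \ref{adone}--\ref{lowb} rather than around a direct parameter-space covering.
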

   
   Our paper is organized as follows. We recall some classical results of the theory of $\beta$-transformations in Section $2$. Theorems \ref{A} and \ref{B} are proved in Section $3$. Section $4$ establishes Theorems \ref{C} and \ref{D}.

\medskip  
\section{Beta-transformations}
  
   Beta-expansion was introduced by R\'{e}nyi \cite{Ren57} in 1957. For any $\beta>1$, the $\beta$-transformation $T_{\beta}$ on $[0,1)$ is defined by $$T_\beta x=\beta x-\lfloor\beta x\rfloor,$$ where $\lfloor\xi\rfloor$ denotes the largest integer less than or equal to $\xi$. Let
\begin{equation*} \lceil\beta\rfloor=
  \begin{cases}
  \beta -1, &  \text{if}~\beta \text{ is a positive integer},\\
  \lfloor \beta \rfloor, &  \text{otherwise}.
  \end{cases}
\end{equation*}
  
\begin{Definition}
   A sequence $\{\varepsilon_n:\varepsilon_n=\varepsilon_n(x,\beta)\}_{n\geq 1}\in\mathcal{A}^{\mathbb{N}}:=\{0,1,\cdots,\lceil\beta\rfloor\}^{\mathbb{N}}$ is called the $\beta$-expansion of a number $x\in[0,1)$, if
\begin{equation}\label{E1}
   x=\dfrac{\varepsilon_1}{\beta}+\dfrac{\varepsilon_2}{\beta^2}+\cdots+\dfrac{\varepsilon_n}{\beta^n}+\cdots,
\end{equation}
   where $\varepsilon_n(x,\beta)=\lfloor\beta T^{n-1}_\beta x\rfloor$, for all positive integers $n\in\mathbb{N}$. We also write $$d_\beta(x)=\left(\varepsilon_1,\cdots,\varepsilon_n,\cdots\right).$$
\end{Definition}
  
   We can extend the definition of the $\beta$-transformation to the point $1$ as:$$T_\beta 1=\beta-\lfloor\beta \rfloor.$$ One can obtain $$ 1=\dfrac{\varepsilon_1(1,\beta)}{\beta}+\dfrac{\varepsilon_2(1,\beta)}{\beta^2}+\cdots+\dfrac{\varepsilon_n(1,\beta)}{\beta^n}+\cdots,$$ where $\varepsilon_n(1,\beta)=\lfloor\beta T^{n-1}_\beta 1\rfloor,~\text{for all positive integers $n\in\mathbb{N}$}$. We also write $$ d_\beta(1)=\left(\varepsilon_1(1,\beta),\cdots,\varepsilon_n(1,\beta),\cdots\right).$$ If $d_\beta(1)$ is finite, i.e. there is an integer $m>0$ such that $\varepsilon_m(1,\beta)\neq 0$ and $\varepsilon_i(1,\beta)=0$ for all $i>m$, then $\beta$ is called a \emph{simple Parry number}. In this case, the infinite $\beta$-expansion of $1$ is defined as $$(\varepsilon^\ast_1(\beta),\varepsilon^\ast_2(\beta),\cdots,\varepsilon^\ast_n(\beta),\cdots):=(\varepsilon_1(1,\beta),\varepsilon_2(1,\beta),\cdots,\varepsilon_m(1,\beta)-1)^\infty,$$ where $(\omega)^\infty$ denotes the periodic sequence. If $d_\beta(1)$ is infinite, then we define $$(\varepsilon^\ast_1(\beta),\varepsilon^\ast_2(\beta),\cdots,\varepsilon^\ast_n(\beta),\cdots):=(\varepsilon_1(1,\beta),\varepsilon_2(1,\beta),\cdots,\varepsilon_n(1,\beta),\cdots).$$
   
   Endow the set $\mathcal{A}^{\mathbb{N}}$ with the product topology and define the one-sided shift operator $\sigma$ as $$\sigma\left((\omega_n)_{n\geq1}\right):=(\omega_{n+1})_{n\geq1},$$ for any infinite sequence $(\omega_n)_{n\geq1}\in\mathcal{A}^{\mathbb{N}}$. The lexicographical order $<_{lex}$ on $\mathcal{A}^{\mathbb{N}}$ is defined as $$\omega=(\omega_1,\omega_2,\cdots)<_{lex}\omega'=(\omega'_1,\omega'_2,\cdots),$$ if $\omega_1<\omega'_1$ or if there is an integer $k\geq 2$ such that for all $1\leq i< k$, $\omega_i=\omega'_i$ but $\omega_k<\omega'_k$. Denote by $\omega\leq_{lex}\omega'$ if $\omega<_{lex}\omega'$ or $\omega=\omega'$.    
   
   A finite word $(\omega_1,\omega_2,\cdots,\omega_n)$ is called $\beta$-admissible, if there is $x\in[0,1]$ such that the $\beta$-expansion of $x$ begins with $(\omega_1,\omega_2,\cdots,\omega_n).$ An infinite sequence $(\omega_1,\omega_2,\cdots,\omega_n,\cdots)$ is called $\beta$-admissible, if there is $x\in[0,1]$ such that the $\beta$-expansion of $x$ is $(\omega_1,\omega_2,\cdots,\omega_n,\cdots)$. An infinite sequence $(\omega_1,\omega_2,\cdots,\omega_n,\cdots)$ is self-admissible, if $$\sigma^k(\omega_1,\omega_2,\cdots,\omega_n,\cdots)\leq_{lex}(\omega_1,\omega_2,\cdots,\omega_n,\cdots),~\text{for }k\geq0.$$ Denote by $\Sigma_\beta$ the set of all infinite $\beta$-admissible sequences and denote by $\Sigma^n_\beta$ the set of all $\beta$-admissible sequences with length $n$. The $\beta$-admissible sequences are characterized by Parry \cite{P1960} and R\'{e}nyi \cite{Ren57}.
\begin{Theorem}\label{Admissible}
   Let $\beta>1$,
\begin{enumerate}[(1)]
  \item (\cite[Lemma 1]{P1960})
   A word $\omega=(\omega_n)_{n\geq 1}\in\Sigma_\beta$ if and only if $$\sigma^k(\omega)\leq_{lex}(\varepsilon^\ast_1(\beta),\varepsilon^\ast_2(\beta),\cdots,\varepsilon^\ast_n(\beta),\cdots),~\text{for all $k\geq 0$}.$$ 
  	
  \item (\cite[Lemma 3]{P1960}) For any $x_1,~x_2\in[0,1]$, $x_1<x_2$ if and only if $$d_\beta(x_1)<_{lex}d_\beta(x_2).$$
  	
  \item (\cite[Lemma 4]{P1960}) For any $\beta_2>\beta_1>1$, one has $$\Sigma^n_{\beta_1}\subseteq\Sigma^n_{\beta_2},\quad\Sigma_{\beta_1}\subseteq\Sigma_{\beta_2}.$$
\end{enumerate}	
\end{Theorem}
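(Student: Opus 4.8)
The three assertions are the classical Parry--Rényi facts, and I would establish them in the order (2), (1), (3), since the order-preserving property underlies the other two. For part (2) the engine is the greedy algorithm itself: since $\varepsilon_n(x,\beta)=\lfloor\beta T^{n-1}_\beta x\rfloor$ and $x\mapsto\lfloor\beta x\rfloor$ is non-decreasing, the leading digit is a monotone function of $x$, and I would argue digit by digit. If $x_1<x_2$, let $k$ be the first index at which $d_\beta(x_1)$ and $d_\beta(x_2)$ differ; such a $k$ exists because the reconstruction formula \eqref{E1} makes $x\mapsto d_\beta(x)$ injective. On the common cylinder of rank $k-1$ the map $T^{k-1}_\beta$ is an increasing affine bijection, so $T^{k-1}_\beta x_1<T^{k-1}_\beta x_2$, whence $\varepsilon_k(x_1)=\lfloor\beta T^{k-1}_\beta x_1\rfloor\le\lfloor\beta T^{k-1}_\beta x_2\rfloor=\varepsilon_k(x_2)$; as the two digits differ this inequality is strict and $d_\beta(x_1)<_{lex}d_\beta(x_2)$. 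The converse follows from injectivity together with this forward implication, since $x_1\ge x_2$ would contradict $d_\beta(x_1)<_{lex}d_\beta(x_2)$.

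For part (1) the hinge is the companion inequality $d_\beta(y)\le_{lex}(\varepsilon^\ast_n(\beta))_{n\ge1}$ for every $y\in[0,1)$, which I would prove directly by comparing series values: if $d_\beta(y)$ first exceeded $(\varepsilon^\ast_n(\beta))$ at some position $k$, then $y=\sum_{n\ge1}\varepsilon_n(y)\beta^{-n}$ would already be at least $\sum_{n\ge1}\varepsilon^\ast_n(\beta)\beta^{-n}=1$, a contradiction; the periodic redefinition of $(\varepsilon^\ast_n(\beta))$ in the simple Parry case is exactly what guarantees both $\sum\varepsilon^\ast_n(\beta)\beta^{-n}=1$ and the strictness needed here. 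Necessity of the criterion is then immediate, for if $\omega=d_\beta(x)$ then $\sigma^k\omega=d_\beta(T^k_\beta x)$ with $T^k_\beta x\in[0,1)$, so $\sigma^k\omega\le_{lex}(\varepsilon^\ast_n(\beta))$ for every $k\ge0$. For sufficiency I would set $x:=\sum_{n\ge1}\omega_n\beta^{-n}$ and prove by induction that the greedy map recovers $\omega$, namely $\lfloor\beta T^{n-1}_\beta x\rfloor=\omega_n$ for all $n$; here the hypothesis $\sigma^k\omega\le_{lex}(\varepsilon^\ast_n(\beta))$ is used to keep every remainder $T^n_\beta x$ inside $[0,1)$ and to force the digit read off to equal $\omega_{n+1}$.

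For part (3) the key input is that $\beta\mapsto(\varepsilon^\ast_n(\beta))_{n\ge1}$ is non-decreasing for $<_{lex}$, which I would deduce from the identity $1=\sum_{n\ge1}\varepsilon^\ast_n(\beta)\beta^{-n}$ by comparing the two series for $\beta_1<\beta_2$ and invoking the maximality (quasi-greedy) characterization of $(\varepsilon^\ast_n(\beta))$. Granting this, part (1) closes the argument at once: if $\omega\in\Sigma_{\beta_1}$ then $\sigma^k\omega\le_{lex}(\varepsilon^\ast_n(\beta_1))\le_{lex}(\varepsilon^\ast_n(\beta_2))$ for all $k\ge0$, so $\omega\in\Sigma_{\beta_2}$, giving $\Sigma_{\beta_1}\subseteq\Sigma_{\beta_2}$; restricting the admissibility test to the first $n$ coordinates yields $\Sigma^n_{\beta_1}\subseteq\Sigma^n_{\beta_2}$.

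The main obstacle is the sufficiency half of part (1): reconciling the abstractly defined $x=\sum_{n\ge1}\omega_n\beta^{-n}$ with the greedy digits produced by $T_\beta$, where one must rule out any carrying phenomenon and verify that the partial remainders never leave $[0,1)$. The simple Parry case, in which $d_\beta(1)$ is finite and one must work with the periodic quasi-greedy expansion $(\varepsilon^\ast_n(\beta))$ rather than with $d_\beta(1)$ itself, is the point demanding the most care, both in establishing the companion inequality above and in the monotonicity step underlying part (3).
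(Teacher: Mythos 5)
The paper does not actually prove Theorem~\ref{Admissible}: all three parts are quoted from Parry \cite{P1960} and R\'enyi \cite{Ren57} as background, so there is no in-paper argument to compare yours against. Your plan is the standard textbook route --- monotonicity of the greedy digits for (2), a companion inequality plus an induction on remainders for (1), monotonicity of $\beta\mapsto(\varepsilon^\ast_n(\beta))_{n\geq1}$ feeding into (1) for (3) --- and your sketches of (2) and (3) are sound modulo the points below.

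There is, however, a genuine gap in the sufficiency half of (1), exactly the step you flag as the main obstacle. With the \emph{non-strict} inequality $\leq_{lex}$, and with $\Sigma_\beta$ as this paper defines it (sequences that actually arise as greedy expansions of points of $[0,1]$), the claim you are trying to prove is false, so no induction can close it. Take $\beta=(1+\sqrt{5})/2$, so that $(\varepsilon^\ast_n(\beta))_{n\geq1}=(1,0,1,0,\cdots)$, and let $\omega=(0,1,0,1,\cdots)$. Every shift of $\omega$ is $\leq_{lex}(1,0,1,0,\cdots)$, yet the value of $\omega$ is $\sum_{n\geq1}\omega_n\beta^{-n}=1/\beta$, whose greedy expansion is $(1,0,0,\cdots)$; hence $\omega\notin\Sigma_\beta$. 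Your induction dies at the very first step: $\beta x=1$, so the greedy digit is $1\neq\omega_1=0$; equivalently, the remainder $\beta x-\omega_1=1$ leaves $[0,1)$. This is precisely the carrying phenomenon you hoped to exclude, and under $\leq_{lex}$ it cannot be excluded, because equality in the hypothesis allows a tail of $\omega$ to have value exactly $1$. To get a true statement you must either prove the criterion with strict inequality, $\sigma^k\omega<_{lex}(\varepsilon^\ast_n(\beta))$ for all $k\geq0$ (which characterizes expansions of points of $[0,1)$), or read $\Sigma_\beta$ as the closed $\beta$-shift, for which the non-strict criterion is correct but for which ``the greedy map recovers $\omega$'' is no longer the right thing to prove. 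The same boundary issue touches your necessity argument: the paper's definition of admissibility allows $x=1$, and when $\beta$ is a simple Parry number $d_\beta(1)$ violates the stated criterion (for the golden mean, $d_\beta(1)=(1,1,0,0,\cdots)>_{lex}(1,0,1,0,\cdots)$), while your argument silently assumes $x\in[0,1)$. A secondary gap: your series comparison for the companion inequality uses the tail bound $\sum_{j\geq1}\varepsilon^\ast_{k+j}(\beta)\,\beta^{-j}\leq1$, which does not follow from lexicographic information alone and requires its own (standard, but not one-line) argument.
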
 

\begin{Theorem}\label{cardinality}(\cite[Theorem 2]{Ren57})
   For any $\beta>1$, one has $$\beta^n\leq\sharp \Sigma^n_\beta\leq\dfrac{\beta^{n+1}}{\beta-1},$$ where $\sharp$ denotes the cardinality of a finite set.
\end{Theorem}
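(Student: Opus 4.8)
The plan is to translate the combinatorial count $\sharp\Sigma_\beta^n$ into a geometric statement about the cylinders of the $\beta$-transformation and to extract both bounds from a single ``mass'' identity. For an admissible word $w=(\omega_1,\dots,\omega_n)$ write $I(w)=\{x\in[0,1):\varepsilon_i(x,\beta)=\omega_i,\ 1\le i\le n\}$ for its cylinder. By the admissibility characterization (Theorem \ref{Admissible}), the nonempty rank-$n$ cylinders are in bijection with $\Sigma_\beta^n$ and they form a partition of $[0,1)$. On each such cylinder $T_\beta^n$ is affine with slope $\beta^n$, and since the left endpoint of $I(w)$ is the point $\sum_{i=1}^n\omega_i\beta^{-i}$, whose image under $T_\beta^n$ is $0$, the image $T_\beta^n(I(w))$ is an interval of the form $[0,g_w)$ with $g_w=\beta^n|I(w)|\in(0,1]$ (here $g_w=1$ exactly when $I(w)$ is ``full''). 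Summing over all words and using that the cylinders partition $[0,1)$ yields the key identity
\begin{equation*}
\sum_{w\in\Sigma_\beta^n}g_w=\beta^n\sum_{w\in\Sigma_\beta^n}|I(w)|=\beta^n .
\end{equation*}

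The lower bound is then immediate: $g_w\le 1$ forces $|I(w)|\le\beta^{-n}$, so $1=\sum_w|I(w)|\le\beta^{-n}\,\sharp\Sigma_\beta^n$, i.e. $\sharp\Sigma_\beta^n\ge\beta^n$. For the upper bound I would set up a recursion by counting the rank-$(n+1)$ subcylinders of a fixed rank-$n$ cylinder $I(w)$: these correspond to the rank-$1$ cylinders that meet $T_\beta^n(I(w))=[0,g_w)$. The rank-$1$ cylinders have left endpoints $0,\tfrac1\beta,\dots,\tfrac{\lfloor\beta\rfloor}{\beta}$, so the number meeting $[0,g_w)$ is at most $\#\{i\ge0:i<\beta g_w\}=\lceil\beta g_w\rceil\le\beta g_w+1$. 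Summing over $w$ and invoking the identity above gives
\begin{equation*}
\sharp\Sigma_\beta^{n+1}\le\sum_{w\in\Sigma_\beta^n}\bigl(\beta g_w+1\bigr)=\beta\cdot\beta^n+\sharp\Sigma_\beta^n=\beta^{n+1}+\sharp\Sigma_\beta^n .
\end{equation*}

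Finally I would telescope this recursion, starting from the convention $\sharp\Sigma_\beta^0=1$, to obtain $\sharp\Sigma_\beta^n\le 1+\sum_{k=1}^n\beta^k=\frac{\beta^{n+1}-1}{\beta-1}\le\frac{\beta^{n+1}}{\beta-1}$, which is the desired upper bound. The routine parts are the affine description of $T_\beta^n$ on cylinders and the edge cases (integer $\beta$, or $g_w$ landing exactly on a rank-$1$ endpoint), which the inequalities absorb automatically. The one genuinely delicate point is justifying that each rank-$n$ cylinder maps precisely onto an interval anchored at $0$ and that its children are counted by the rank-$1$ cylinders meeting that image; once this geometric picture is pinned down, both bounds drop out of the single identity $\sum_w g_w=\beta^n$ together with the elementary estimate for the number of children.
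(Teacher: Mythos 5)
The paper itself offers no proof of this statement: it is quoted directly from R\'enyi \cite[Theorem 2]{Ren57}, so there is no internal argument to compare yours against. Judged on its own, your proof is correct and complete. The bijection between admissible words and nonempty rank-$n$ cylinders, the partition of $[0,1)$, and the identity $\sum_{w}g_w=\beta^n\sum_w\lvert I(w)\rvert=\beta^n$ are all sound; the lower bound follows at once from $g_w\le 1$, and the children-counting recursion $\sharp\Sigma_\beta^{n+1}\le\sum_w(\beta g_w+1)=\beta^{n+1}+\sharp\Sigma_\beta^n$ telescopes to $\sharp\Sigma_\beta^n\le(\beta^{n+1}-1)/(\beta-1)$, which is in fact slightly sharper than R\'enyi's stated bound. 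The one step you rightly flag as delicate --- that $T_\beta^n$ maps each rank-$n$ cylinder affinely with slope $\beta^n$ onto an interval $[0,g_w)$ anchored at $0$ --- is a standard structural fact about $\beta$-expansions and closes with a one-line induction: if $T_\beta^n I_n(w)=[0,g_w)$, then for a child digit $j$ one has $T_\beta^{n+1}I_{n+1}(wj)=T_\beta\bigl([0,g_w)\cap[j/\beta,(j+1)/\beta)\bigr)$, which is again of the form $[0,g)$ (equal to $[0,1)$ when $(j+1)/\beta\le g_w$, and to $[0,\beta g_w-j)$ otherwise). This is the same fact that underlies the paper's Proposition \ref{full} and Lemma \ref{length} (via Fan--Wang \cite{FW2012}), so invoking or proving it is unproblematic; your identification of children of $w$ with rank-$1$ cylinders meeting $[0,g_w)$ then follows from $\varepsilon_{n+1}(x)=\varepsilon_1(T_\beta^n x)$, and the integer-$\beta$ and boundary cases are, as you say, absorbed by the inequalities.
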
 

   For every $(\omega_1,\cdots,\omega_n)\in\Sigma^n_\beta$, we call $$I_n(\omega_1,\cdots,\omega_n):=\{x\in[0,1]:d_\beta(x) \text{ starts with }\omega_1,\cdots,\omega_n\}$$ an \emph{$n$-th order basic interval} with respect to $\beta$. Denote by $I_n(x)$ the $n$-th order basic interval containing $x$. The basic intervals are also called \emph{cylinders}. It is crucial to estimate the lengths of the basic intervals. We will use the key notion of \textquotedblleft full cylinder\textquotedblright introduced by Fan and Wang \cite{FW2012}. For any $(\omega_1,\cdots,\omega_n)\in\Sigma^n_\beta$, a basic interval $I_n(\omega_1,\cdots,\omega_n)$ is said to be full if its length is $\beta^{-n}$. Denote by $\lvert I_n(\omega_1,\cdots,\omega_n)\rvert$ the length of the $n$-th order basic interval.
\begin{Proposition}\label{full}
   (\cite[Lemma 3.1]{FW2012} and \cite[Lemma 2.5]{SW2013})
	 
   For any $(\omega_1,\cdots,\omega_n)\in\Sigma^n_\beta$, the following statements are equivalent:
\begin{enumerate}[(1)]
   \item $I_n(\omega_1,\cdots,\omega_n)$ is a full basic interval.
   \item $T^n_\beta I_n(\omega_1,\cdots,\omega_n)=[0,1)$.
   \item For any $\omega'=(\omega'_1,\cdots,\omega'_m)\in\Sigma^m_\beta$, the concatenation $$(\omega_1,\cdots,\omega_n,\omega'_1,\cdots,\omega'_m)\in\Sigma^{n+m}_\beta, \text{ i.e., is $\beta$-admissible.}$$ 
\end{enumerate}
\end{Proposition}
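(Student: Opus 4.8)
The plan is to reduce all three equivalences to the affine structure of $T_\beta^n$ on a single cylinder. First I would record the fundamental identity: for every $x\in I_n(\omega_1,\cdots,\omega_n)$ one has
$$x=\sum_{i=1}^n\dfrac{\omega_i}{\beta^i}+\dfrac{T_\beta^n x}{\beta^n},$$
so that the restriction of $T_\beta^n$ to the cylinder is the affine map $x\mapsto\beta^n x-\sum_{i=1}^n\omega_i\beta^{n-i}$ of slope $\beta^n$. Since $I_n(\omega_1,\cdots,\omega_n)$ is an interval on which $T_\beta^n$ is continuous and strictly increasing, the image $T_\beta^n I_n(\omega_1,\cdots,\omega_n)$ is again an interval; its left endpoint corresponds to the continuation $0^\infty$ and maps to $0$. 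Hence the image equals $[0,\beta^n\lvert I_n(\omega_1,\cdots,\omega_n)\rvert)$, which in particular forces $\lvert I_n(\omega_1,\cdots,\omega_n)\rvert\leq\beta^{-n}$ in all cases.

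With this identity in hand, the equivalence of (1) and (2) is immediate: the cylinder is \emph{full}, i.e. $\lvert I_n(\omega_1,\cdots,\omega_n)\rvert=\beta^{-n}$, exactly when $\beta^n\lvert I_n(\omega_1,\cdots,\omega_n)\rvert=1$, and by the previous paragraph this is precisely the condition $T_\beta^n I_n(\omega_1,\cdots,\omega_n)=[0,1)$.

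For the equivalence of (2) and (3) I would exploit the correspondence between the orbit map and concatenation of digit strings: the $\beta$-expansion of $x\in I_n(\omega_1,\cdots,\omega_n)$ is the word $(\omega_1,\cdots,\omega_n)$ followed by the $\beta$-expansion of $T_\beta^n x$, so a finite word $(\omega_1,\cdots,\omega_n,\omega'_1,\cdots,\omega'_m)$ is admissible if and only if the cylinder $I_m(\omega'_1,\cdots,\omega'_m)$ meets $T_\beta^n I_n(\omega_1,\cdots,\omega_n)$. Granting (2), every $\omega'\in\Sigma^m_\beta$ labels a nonempty cylinder inside $[0,1)=T_\beta^n I_n(\omega_1,\cdots,\omega_n)$, whence the concatenation is admissible and (3) holds. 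For the converse, given $y\in[0,1)$ with expansion $(\omega'_1,\omega'_2,\cdots)$, every prefix is admissible, so by (3) every prefix of $(\omega_1,\cdots,\omega_n,\omega'_1,\omega'_2,\cdots)$ is admissible; I would then upgrade this to admissibility of the full infinite sequence, obtaining a point $x\in I_n(\omega_1,\cdots,\omega_n)$ with $T_\beta^n x=y$, hence $[0,1)\subseteq T_\beta^n I_n(\omega_1,\cdots,\omega_n)$ and (2) follows. The main obstacle is exactly this finite-to-infinite passage: one must show that an infinite sequence all of whose prefixes are admissible is itself admissible. I expect to resolve it either through Parry's criterion (Theorem \ref{Admissible}(1)), checking every shift $\sigma^k$ of the concatenation against $(\varepsilon^\ast_1(\beta),\varepsilon^\ast_2(\beta),\cdots)$ and letting the prefix length tend to infinity in the lexicographic comparison, or more economically by a compactness argument on the nested nonempty cylinders $I_{n+m}(\omega_1,\cdots,\omega_n,\omega'_1,\cdots,\omega'_m)$. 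Throughout, some care with the half-open endpoints of the cylinders and with the exceptional role of the point $1$ in the lexicographic inequalities will be needed.
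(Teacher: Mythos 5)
Your reduction to the affine structure of $T_\beta^n$ on a cylinder is sound, and it does carry the equivalence of (1) and (2) and the implication (2) $\Rightarrow$ (3). (The paper itself offers no proof of this proposition --- it is quoted from \cite{FW2012} and \cite{SW2013} --- so correctness is the only issue.) The genuine gap sits exactly at the step you flag as the main obstacle, and neither of your two proposed resolutions closes it: the lemma ``an infinite sequence all of whose prefixes are admissible is itself admissible'' is false, for every $\beta>1$. Concretely, take $\beta=2$ and the sequence $(0,1,1,1,\cdots)$: every prefix $(0,1,\cdots,1)$ is admissible (it begins the expansion of any point slightly to the left of $1/2$), but the infinite sequence is the expansion of no point, since its value is $1/2$ and $d_2(1/2)=(1,0,0,\cdots)$. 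In general $(0,\varepsilon^*_1(\beta),\varepsilon^*_2(\beta),\cdots)$ has the same property for every $\beta$. The reason both repairs fail is the same: letting the prefix length tend to infinity in the lexicographic comparisons yields only the non-strict inequalities $\sigma^k(\omega)\leq_{lex}(\varepsilon^*_1(\beta),\varepsilon^*_2(\beta),\cdots)$, which characterize the closed $\beta$-shift, not the set of actual expansions; to be the expansion of a point of $[0,1)$ a sequence must satisfy the strict inequality at every shift, and the equality case is precisely what can survive the passage to the limit. (Theorem \ref{Admissible}(1), read literally with $\leq_{lex}$, is the usual slightly abusive formulation and is only correct for the closure.) The compactness variant fails for the same reason: the closures of the nested cylinders do intersect, but the greedy expansion of the limit point need not be the intended concatenation --- this is the carry phenomenon $0.0111\cdots=0.1000\cdots$ in base $2$.

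The implication (3) $\Rightarrow$ (2) is nevertheless true, but it needs an argument that stays at the level of finite words instead of trying to realize each $y\in[0,1)$ by an infinite concatenation. For instance: by your first paragraph, $T^n_\beta I_n(\omega_1,\cdots,\omega_n)$ is the interval $[0,\beta^n\lvert I_n(\omega_1,\cdots,\omega_n)\rvert)$. If this is not all of $[0,1)$, choose $z$ with $\beta^n\lvert I_n(\omega_1,\cdots,\omega_n)\rvert<z<1$ and let $\omega'$ be the prefix of $d_\beta(z)$ of length $m$; it is admissible, being a prefix of an actual expansion. Since $z\in I_m(\omega')$ and $\lvert I_m(\omega')\rvert\leq\beta^{-m}$, for $m$ large the whole cylinder $I_m(\omega')$ lies in $\left(\beta^n\lvert I_n(\omega_1,\cdots,\omega_n)\rvert,1\right)$, hence is disjoint from $T^n_\beta I_n(\omega_1,\cdots,\omega_n)$; by your own correspondence between concatenation and the orbit map, $(\omega_1,\cdots,\omega_n,\omega'_1,\cdots,\omega'_m)$ is then not admissible, contradicting (3). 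This bypasses the finite-to-infinite passage entirely, and the only facts it uses --- cylinders are intervals of length at most $\beta^{-m}$, and prefixes of expansions are admissible --- are exactly the ones your first two paragraphs already establish.
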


\begin{Proposition}\label{fullc}
	(\cite[Corollary 2.6]{SW2013})
\begin{enumerate}[(1)]
   \item If $(\omega_1,\cdots,\omega_{n+1})$ is a $\beta$-admissible sequence with $\omega_{n+1}\neq0$, then $$I_{n+1}(\omega_1,\cdots,\omega'_{n+1})$$ is full for any $0\leq \omega'_{n+1}<\omega_{n+1}$. 
   \item For every $\omega\in\Sigma^n_\beta$, if $I_n(\omega)$ is full, then for any $\omega'\in\Sigma^m_\beta$, one has $$\lvert I_{n+m}(\omega,\omega')\rvert=\lvert I_n(\omega)\rvert\cdot\lvert I_m(\omega')\rvert=\dfrac{\lvert I_m(\omega')\rvert}{\beta^n}.$$
   \item For any $\omega\in\Sigma^n_\beta$, if $I_{n+m}(\omega,\omega')$ is a full basic interval contained in $I_n(\omega)$ with the smallest order, then $$\lvert I_{n+m}(\omega,\omega')\rvert\geq\dfrac{\lvert I_n(\omega)\rvert}{\beta}.$$ 
\end{enumerate}
\end{Proposition}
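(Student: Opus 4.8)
The plan is to deduce all three assertions from the fullness characterizations in Proposition~\ref{full}, Parry's criterion (Theorem~\ref{Admissible}(1)), and the elementary fact that on each cylinder $I_n(\omega)$ the transformation $T_\beta^n$ is an increasing affine bijection of slope $\beta^n$ onto the interval $J:=T_\beta^n I_n(\omega)$. Since the left endpoint of $I_n(\omega)$ equals $\sum_{i=1}^n\omega_i\beta^{-i}$, with $\beta$-expansion $(\omega_1,\dots,\omega_n,0,0,\dots)$, one has $J=[0,b)$ where $b=\beta^n\lvert I_n(\omega)\rvert\leq 1$; write $\varepsilon^\ast(\beta)=(\varepsilon^\ast_1(\beta),\varepsilon^\ast_2(\beta),\dots)$ for the sequence introduced above. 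For part~(1) I would invoke the equivalence (1)$\Leftrightarrow$(3) of Proposition~\ref{full} and verify that, for every admissible word $(\eta_1,\dots,\eta_m)$, the concatenation $(\omega_1,\dots,\omega_n,\omega'_{n+1},\eta_1,\dots,\eta_m)$ meets Parry's condition. Shifts $\sigma^k$ with $k\geq n+1$ begin inside $(\eta_1,\dots,\eta_m)$ and are controlled by the admissibility of that word. For $0\leq k\leq n$, admissibility of the original word gives $(\omega_{k+1},\dots,\omega_{n+1})\leq_{lex}(\varepsilon^\ast_1(\beta),\dots,\varepsilon^\ast_{n+1-k}(\beta))$: either the first strict inequality already occurs strictly before the last coordinate, in which case lowering $\omega_{n+1}$ to $\omega'_{n+1}$ leaves it intact, or $(\omega_{k+1},\dots,\omega_n)$ agrees with the corresponding prefix of $\varepsilon^\ast(\beta)$ and then $\omega'_{n+1}<\omega_{n+1}\leq\varepsilon^\ast_{n+1-k}(\beta)$ creates a strict inequality at that coordinate. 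In either case the strict inequality sits at a coordinate of index $\leq n+1-k$, so every continuation keeps $\sigma^k(\cdot)\leq_{lex}\varepsilon^\ast(\beta)$; this case bookkeeping is the only real content of (1).

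Part~(2) is then immediate from the affine picture: fullness of $I_n(\omega)$ gives $J=[0,1)$ and $\lvert I_n(\omega)\rvert=\beta^{-n}$, so $T_\beta^n|_{I_n(\omega)}$ is a slope-$\beta^n$ bijection onto $[0,1)$. As $I_{n+m}(\omega,\omega')$ is exactly the preimage $(T_\beta^n|_{I_n(\omega)})^{-1}(I_m(\omega'))$, taking preimages multiplies lengths by $\beta^{-n}$, whence $\lvert I_{n+m}(\omega,\omega')\rvert=\beta^{-n}\lvert I_m(\omega')\rvert=\lvert I_n(\omega)\rvert\cdot\lvert I_m(\omega')\rvert$.

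For part~(3) I would first pass to the image interval $J=[0,b)$. Without assuming $I_n(\omega)$ full, the same affine description gives $\lvert I_{n+m}(\omega,\omega')\rvert=\beta^{-n}\lvert I_m(\omega')\cap J\rvert$; since $\lvert I_m(\omega')\rvert\leq\beta^{-m}$, the cylinder $I_{n+m}(\omega,\omega')$ is full exactly when $I_m(\omega')$ is full and $I_m(\omega')\subseteq J$. Hence order-$(n+m)$ full sub-cylinders of $I_n(\omega)$ correspond to order-$m$ full cylinders lying inside $[0,b)$. The key observation is that $[0,\beta^{-k})=I_k(0,\dots,0)$ is full for every $k$, a block of zeros imposing no constraint on admissible continuations. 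Letting $m_0$ be the least integer with $\beta^{-m_0}\leq b$, the interval $[0,\beta^{-m_0})\subseteq J$ is a full order-$m_0$ cylinder, whereas no full cylinder of order $<m_0$ can fit in $[0,b)$ because its length would exceed $b$. Thus the smallest order of a full sub-cylinder is $n+m_0$, and minimality of $m_0$ forces $\beta^{-(m_0-1)}>b$, i.e. $\beta^{-m_0}>b/\beta$; multiplying by $\beta^{-n}$ and using $\lvert I_n(\omega)\rvert=b\beta^{-n}$ gives $\lvert I_{n+m_0}(\omega,\omega')\rvert=\beta^{-(n+m_0)}>\lvert I_n(\omega)\rvert/\beta$.

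The step I expect to be most delicate is the reduction opening part~(3): showing that fullness of $I_{n+m}(\omega,\omega')$ is equivalent to fullness of $I_m(\omega')$ together with the containment $I_m(\omega')\subseteq T_\beta^n I_n(\omega)$, which relies on the exact image $T_\beta^n I_n(\omega)=[0,b)$ and the length identity $\lvert I_{n+m}(\omega,\omega')\rvert=\beta^{-n}\lvert I_m(\omega')\cap J\rvert$. Once this correspondence and the fullness of the left-aligned cylinders $[0,\beta^{-k})$ are established, the counting that yields the bound is routine.
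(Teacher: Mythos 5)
The paper does not prove this proposition at all: it is quoted verbatim from Shen and Wang \cite[Corollary 2.6]{SW2013}, so there is no in-paper argument to compare against. Your blind reconstruction is, as far as I can check, a correct and self-contained derivation, and it is organized around the right tools: part (1) via the equivalence (1)$\Leftrightarrow$(3) of Proposition \ref{full} plus Parry's criterion, with the correct two-case bookkeeping (either the lexicographic strictness occurs before the last coordinate and survives lowering the digit, or the prefix agrees with $\varepsilon^\ast(\beta)$ and then $\omega'_{n+1}<\omega_{n+1}\leq\varepsilon^\ast_{n+1-k}(\beta)$ creates strictness at coordinate $n+1-k$, which no continuation can destroy); part (2) via the affine slope-$\beta^n$ bijection onto $[0,1)$ guaranteed by Proposition \ref{full}(2); and part (3) via the reduction to the image interval $J=[0,b)$, the identity $\lvert I_{n+m}(\omega,\omega')\rvert=\beta^{-n}\lvert I_m(\omega')\cap J\rvert$, and the observation that $I_k(0,\dots,0)=[0,\beta^{-k})$ is always full, which pins the smallest full order at $n+m_0$ with $\beta^{-(m_0-1)}>b$ and yields the bound. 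Two facts you assert without proof deserve a line each if this were written out formally: that the left endpoint $\sum_{i=1}^n\omega_i\beta^{-i}$ of a cylinder indeed has expansion $(\omega_1,\dots,\omega_n,0,0,\dots)$ (this needs the observation that every admissible word is realized by some $x<1$, so the greedy algorithm recovers the word), and that $T_\beta^n$ restricted to a cylinder is affine of slope $\beta^n$ with the cylinder being an interval closed on the left; both are standard and both are exactly what makes $J=[0,b)$ and the preimage-length identity legitimate. With those spelled out, your argument is complete, and it has the merit of making the paper's citation self-contained using only Proposition \ref{full}, Theorem \ref{Admissible} and Theorem \ref{cardinality}-level facts already stated in Section 2.
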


   Next, we define a sequence of numbers $\beta_N$ approaching to $\beta$. Given the infinite $\beta$-expansion $(\varepsilon^\ast_1(\beta),\varepsilon^\ast_2(\beta),\cdots,\varepsilon^\ast_n(\beta),\cdots)$ of $1$. For any $\varepsilon^\ast_N(\beta)>0$, let $\beta_N$ be the unique real solution of the equation
\begin{equation}\label{ED1}
   1=\dfrac{\varepsilon^\ast_1(\beta)}{z}+\cdots+\dfrac{\varepsilon^\ast_N(\beta)}{z^N}.
\end{equation} 
   Therefore, $\beta_N<\beta$ and the sequence $\{\beta_N:N\geq 1\}$ increases and converges to $\beta$ when $N$ tends to infinity.

\begin{Lemma}\label{length}
   (\cite[Lemma 2.7]{SW2013}) For every $\omega\in\Sigma^n_{\beta_N}$ viewed as an element of $\Sigma^n_\beta$, one has $$\dfrac{1}{\beta^{n+N}}\leq\lvert I_n(\omega_1,\cdots,\omega_n)\rvert\leq\dfrac{1}{\beta^n}.$$
\end{Lemma}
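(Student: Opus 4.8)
Since $\beta_N<\beta$, Theorem \ref{Admissible}(3) gives $\Sigma^n_{\beta_N}\subseteq\Sigma^n_\beta$, so for $\omega=(\omega_1,\cdots,\omega_n)\in\Sigma^n_{\beta_N}$ the basic interval $I_n(\omega)$ with respect to $\beta$ is well defined and both inequalities make sense. I would treat the two bounds separately, the upper one being routine. On each $n$-th order basic interval the map $T^n_\beta$ is affine with slope $\beta^n$, because every branch of $T_\beta$ has the form $x\mapsto\beta x-\varepsilon_i$; since $T^n_\beta I_n(\omega)\subseteq[0,1)$, this yields $\beta^n\lvert I_n(\omega)\rvert=\lvert T^n_\beta I_n(\omega)\rvert\leq 1$, i.e. $\lvert I_n(\omega)\rvert\leq\beta^{-n}$ (equality being precisely the fullness condition of Proposition \ref{full}).

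For the lower bound the strategy is to exhibit a full basic interval of order at most $n+N$ contained in $I_n(\omega)$. Indeed, a full interval of order $n+m$ has length $\beta^{-(n+m)}$, so any such interval sitting inside $I_n(\omega)$ forces $\lvert I_n(\omega)\rvert\geq\beta^{-(n+m)}\geq\beta^{-(n+N)}$, which is exactly the desired estimate. Thus everything reduces to producing a word $u$ of length at most $N$ for which the concatenation $(\omega,u)$ is full with respect to $\beta$.

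To construct $u$, let $s$ be the length of the longest suffix of $\omega$ that coincides with a prefix $(\varepsilon^\ast_1(\beta),\cdots,\varepsilon^\ast_s(\beta))$ of the expansion $(\varepsilon^\ast_n(\beta))_{n\geq1}$ of $1$. The first point is that $\beta_N$-admissibility forces $s\leq N-1$: here $\beta_N$ is the simple Parry number associated with the sequence $(\varepsilon^\ast_1(\beta),\cdots,\varepsilon^\ast_{N-1}(\beta),\varepsilon^\ast_N(\beta)-1)^\infty$, and if $\omega$ ended with $(\varepsilon^\ast_1(\beta),\cdots,\varepsilon^\ast_N(\beta))$ then, because $\varepsilon^\ast_N(\beta)>0$, the corresponding shift of $\omega$ would be lexicographically larger than this sequence at the $N$-th coordinate, contradicting $\omega\in\Sigma^n_{\beta_N}$ via Theorem \ref{Admissible}(1). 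The second point is that, again using $\varepsilon^\ast_N(\beta)>0$ together with $s\leq N-1$, there is a least index $j^\ast\geq1$ with $\varepsilon^\ast_{s+j^\ast}(\beta)>0$, and necessarily $j^\ast\leq N-s\leq N$. I would then set $u:=(\varepsilon^\ast_{s+1}(\beta),\cdots,\varepsilon^\ast_{s+j^\ast-1}(\beta),\varepsilon^\ast_{s+j^\ast}(\beta)-1)$, a word of length $j^\ast\leq N$ made of $j^\ast-1$ zeros followed by $\varepsilon^\ast_{s+j^\ast}(\beta)-1$, and claim that $(\omega,u)$ is full.

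The heart of the argument, and the step I expect to be the main obstacle, is the verification of this fullness through Proposition \ref{full}(3) and the lexicographic criterion of Theorem \ref{Admissible}(1): one must check that for every $\beta$-admissible $\omega''$ all shifts of $(\omega,u,\omega'')$ stay $\leq_{lex}(\varepsilon^\ast_n(\beta))_{n\geq1}$. The shifts reaching into $u$ are handled by the design of $u$, whose last symbol drops strictly below the critical digit $\varepsilon^\ast_{s+j^\ast}(\beta)$, so that every suffix of $(\omega,u)$ which starts as a prefix of $(\varepsilon^\ast_n(\beta))_{n\geq1}$ becomes strictly smaller at position $s+j^\ast$ at the latest; the delicate part is ruling out that a shorter suffix of $\omega$, reactivated by the appended zeros of $u$, yields a violating shift, which is where the self-admissibility (self-overlap structure) of $(\varepsilon^\ast_n(\beta))_{n\geq1}$ must be invoked. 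Granting the claim, $I_{n+j^\ast}(\omega,u)$ is a full interval of order $n+j^\ast\leq n+N$ inside $I_n(\omega)$, whence $\lvert I_n(\omega)\rvert\geq\beta^{-(n+j^\ast)}\geq\beta^{-(n+N)}$, completing the proof.
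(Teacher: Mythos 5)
The paper itself contains no proof of this lemma --- it is quoted verbatim from \cite[Lemma 2.7]{SW2013} --- so the comparison must be with the standard argument behind that citation, in which one appends the fixed block $0^N$ to $\omega$ and checks that $(\omega,0^N)$ is full with respect to $\beta$. Your upper bound is correct, your reduction of the lower bound to exhibiting a full subcylinder of order at most $n+N$ inside $I_n(\omega)$ is correct, and your two preparatory points are correctly argued: $\beta_N$-admissibility does force $s\leq N-1$ (a suffix beginning with $(\varepsilon^\ast_1(\beta),\ldots,\varepsilon^\ast_N(\beta))$ would exceed the periodic expansion $(\varepsilon^\ast_1(\beta),\ldots,\varepsilon^\ast_{N-1}(\beta),\varepsilon^\ast_N(\beta)-1)^\infty$ at place $N$), and $\varepsilon^\ast_N(\beta)>0$ gives $j^\ast\leq N-s$.

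What you leave as a granted claim --- fullness of $(\omega,u)$ --- is the heart of the lower bound, so as written the proof is incomplete; but the claim is true and closes exactly by the mechanism you name, in a few lines. Let $\omega''$ be any $\beta$-admissible sequence. A shift starting in $\omega$ whose suffix of length $t$ is not a prefix of $(\varepsilon^\ast_n(\beta))_{n\geq1}$ is strictly smaller already inside $\omega$, so only suffixes equal to the prefix of length $t$ matter, and maximality of $s$ forces $t\leq s$, giving the overlap $(\varepsilon^\ast_{s-t+1}(\beta),\ldots,\varepsilon^\ast_s(\beta))=(\varepsilon^\ast_1(\beta),\ldots,\varepsilon^\ast_t(\beta))$. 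Self-admissibility, $\sigma^{s-t}\varepsilon^\ast(\beta)\leq_{lex}\varepsilon^\ast(\beta)$, combined with this agreement on the first $t$ places, yields $\sigma^s\varepsilon^\ast(\beta)\leq_{lex}\sigma^t\varepsilon^\ast(\beta)$. Since $(u,\omega'')$ agrees with $\sigma^s\varepsilon^\ast(\beta)$ in its first $j^\ast-1$ places (all zeros) and is strictly smaller at place $j^\ast$, transitivity gives $(u,\omega'')<_{lex}\sigma^t\varepsilon^\ast(\beta)$, which is precisely what the shift condition requires. Shifts landing inside $u$ begin either with $0<\varepsilon^\ast_1(\beta)$ or with $\varepsilon^\ast_{s+j^\ast}(\beta)-1$, and the latter is $<\varepsilon^\ast_1(\beta)$ because self-admissibility forces $\varepsilon^\ast_{s+j^\ast}(\beta)\leq\varepsilon^\ast_1(\beta)$; shifts landing in $\omega''$ are harmless. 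With these lines added, your proof is complete and genuinely different from the cited one. The standard choice $u=0^N$ is worth knowing because it makes the verification structural rather than delicate: suffixes of $\omega$ of length at least $N$ are killed at place $N$ by $\beta_N$-admissibility, suffixes of length $t<N$ are killed because the positive digit $\varepsilon^\ast_N(\beta)$ lies in the window $\{t+1,\ldots,t+N\}$ and dominates the appended zeros, and no self-admissibility or dependence of $u$ on $\omega$ is needed. Your shorter extension of length $j^\ast\leq N$ buys a marginally finer estimate, but nothing the lemma needs, since order $n+N$ suffices.
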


\medskip
\section{Proofs of Theorems \ref{A} and \ref{B} }\label{sec2}

   For $\beta>1$ and $x_0\in[0,1]$, by the definitions of $\mathcal{V}_\beta(x,x_0)$ and $\hat{\mathcal{V}}_\beta(x,x_0)$, it can be checked that for every $x\in[0,1]$, we have $$\hat{\mathcal{V}}_\beta(x,x_0)\leq\mathcal{V}_\beta(x,x_0).$$ We first consider two special cases $\hat{\mathcal{V}}_\beta(x,x_0)=\mathcal{V}_\beta(x,x_0)=0$ and $\hat{\mathcal{V}}_\beta(x,x_0)=\mathcal{V}_\beta(x,x_0)=\infty$.  
\begin{Lemma}\label{spezero}
   If $\hat{\mathcal{V}}_\beta(x,x_0)=\mathcal{V}_\beta(x,x_0)=0$, then the set $$\left\{x\in[0,1]:\mathcal{V}_\beta(x,x_0)=0\right\}\cap\left\{x\in[0,1]:\hat{\mathcal{V}}_\beta(x,x_0)=0\right\}$$ is of full Lebesgue measure.  
\end{Lemma}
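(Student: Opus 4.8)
The plan is to first reduce the claim to a statement about the single exponent $\mathcal{V}_\beta$. The excerpt already records the inequality $\hat{\mathcal{V}}_\beta(x,x_0)\leq\mathcal{V}_\beta(x,x_0)$ for every $x\in[0,1]$, so the vanishing of $\mathcal{V}_\beta(x,x_0)$ forces the vanishing of $\hat{\mathcal{V}}_\beta(x,x_0)$. Hence
$$\{x:\mathcal{V}_\beta(x,x_0)=0\}\subseteq\{x:\hat{\mathcal{V}}_\beta(x,x_0)=0\},$$
and the intersection in the statement coincides with $\{x:\mathcal{V}_\beta(x,x_0)=0\}$. It therefore suffices to prove that this set has full Lebesgue measure, equivalently that its complement $\{x:\mathcal{V}_\beta(x,x_0)>0\}$ is Lebesgue-null.

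Next I would decompose the complement along a countable exhaustion. By the definition of $\mathcal{V}_\beta$, the condition $\mathcal{V}_\beta(x,x_0)>0$ means that $|T_\beta^n x-x_0|<\beta^{-nv}$ holds for infinitely many $n$ for some $v>0$; taking $v=1/k$ gives
$$\{x:\mathcal{V}_\beta(x,x_0)>0\}=\bigcup_{k\geq1}A_{1/k},\qquad A_v:=\{x:|T_\beta^n x-x_0|<\beta^{-nv}\text{ for infinitely many }n\}.$$
For each fixed $v>0$, the set $A_v$ is exactly the set $\mathcal{L}(\psi,x_0)$ of $\psi$-well asymptotically approximable points with $\psi(n)=\beta^{-nv}$. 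Since $\sum_n\psi(n)=\sum_n\beta^{-nv}$ is a convergent geometric series, the measure criterion recalled in the introduction (from \cite{P67}) shows $A_v$ is Lebesgue-null. A countable union of null sets is null, which finishes the argument.

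Should one prefer a self-contained proof avoiding \cite{P67}, the nullity of $A_v$ follows from a direct first Borel--Cantelli estimate. Writing $E_n:=\{x:|T_\beta^n x-x_0|<\beta^{-nv}\}$, on each cylinder $I_n(\omega)$ the map $T_\beta^n$ is affine with slope $\beta^n$, so the portion of $E_n$ inside $I_n(\omega)$ has Lebesgue measure at most $\beta^{-n}\cdot 2\beta^{-nv}$; summing over the at most $\beta^{n+1}/(\beta-1)$ admissible cylinders supplied by Theorem \ref{cardinality} yields $|E_n|\leq\frac{2\beta}{\beta-1}\beta^{-nv}$, whence $\sum_n|E_n|<\infty$ and $A_v=\limsup_n E_n$ is null.

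I do not expect a genuine obstacle here: the only point requiring a moment's care is the per-level measure estimate at non-full cylinders, where $T_\beta^n$ maps $I_n(\omega)$ onto a proper subinterval of $[0,1)$ rather than onto all of it. This causes no trouble, since the affine slope $\beta^n$ is the same on every cylinder and the bound $|\{x\in I_n(\omega):T_\beta^n x\in B\}|\leq\beta^{-n}|B|$ holds irrespective of fullness. Thus the whole lemma is a short consequence of the convergence of the geometric series together with the cylinder-counting estimate.
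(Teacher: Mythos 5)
Your proof is correct and follows essentially the same route as the paper: the same reduction via $\hat{\mathcal{V}}_\beta(x,x_0)\leq\mathcal{V}_\beta(x,x_0)$, the same countable decomposition of $\left\{x\in[0,1]:\mathcal{V}_\beta(x,x_0)>0\right\}$ into the sets $A_{1/k}$, and the same appeal to the convergence criterion from \cite{P67}. Your additional self-contained argument---the first Borel--Cantelli estimate using the affine slope $\beta^n$ of $T_\beta^n$ on each cylinder together with the counting bound of Theorem \ref{cardinality}---is also correct and replaces the citation with a direct computation, which the paper itself does not provide.
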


\begin{proof}
   Note that for any fixed $x_0\in[0,1]$ and any $x\in[0,1]$, we always have $$\hat{\mathcal{V}}_\beta(x,x_0)\leq\mathcal{V}_\beta(x,x_0).$$ Then, $$\left\{x\in[0,1]:\mathcal{V}_\beta(x,x_0)=0\right\}\subseteq\left\{x\in[0,1]:\hat{\mathcal{V}}_\beta(x,x_0)=0\right\}.$$ Hence, if for any fixed $x_0\in[0,1]$, we can prove that $\mathcal{V}_\beta(x,x_0)=0$, for Lebesgue almost every $x\in[0,1]$, then we prove the lemma.
   
   Now, we only need to prove $$m\left(\left\{x\in[0,1]:\mathcal{V}_\beta(x,x_0)>0\right\}\right)=0,$$ where $m(\cdot)$ denotes the Lebesgue measure of a set. In fact, we have $$\left\{x\in[0,1]:\mathcal{V}_\beta(x,x_0)>0\right\}=\cup^\infty_{k=1}\left\{x\in[0,1]:\mathcal{V}_\beta(x,x_0)>1/k\right\}$$ and $\left\{x\in[0,1]:\mathcal{V}_\beta(x,x_0)>1/k\right\}$ is a subset of $$\left\{x\in[0,1]:\lvert T^n_\beta x-x_0\rvert<\beta^{-n/k},\quad\text{for infinite many }n\in\mathbb{N}\right\}.$$ Since $\sum^\infty_{n=1}\beta^{-n/k}<\infty$ for any $k\geq1$, by \cite[Theorem 2A, B, C]{P67}, $$m\left(\left\{x\in[0,1]:\lvert T^n_\beta x-x_0\rvert<\beta^{-n/k},\quad\text{for infinite many }n\in\mathbb{N}\right\}\right)=0.$$ Therefore, for any $k\geq1$, we have $$m(\left\{x\in[0,1]:\mathcal{V}_\beta(x,x_0)>1/k\right\})=0.$$ Thus, $$m\left(\left\{x\in[0,1]:\mathcal{V}_\beta(x,x_0)>0\right\}\right)=0.$$  
\end{proof}

   When $\hat{\mathcal{V}}_\beta(x,x_0)=\mathcal{V}_\beta(x,x_0)=\infty$, we have the following Lemma \ref{infinite}.
\begin{Lemma}\label{infinite}
   If the number $\hat{\mathcal{V}}_\beta(x,x_0)=\infty$, then $$\left\{x\in[0,1]:\hat{\mathcal{V}}_\beta(x,x_0)=\infty\right\}=\bigcup^\infty_{n=1}\bigcup_{\omega\in\Sigma^n_\beta}\left\{x\in[0,1]:d_\beta(x)=(w,d_\beta(x_0))\right\}$$ is countable.  
\end{Lemma}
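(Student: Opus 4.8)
The plan is to prove the two inclusions behind the displayed identity and then read off countability. Write $E$ for the right-hand union and $S=\{x\in[0,1]:\hat{\mathcal{V}}_\beta(x,x_0)=\infty\}$ for the left-hand set. Throughout I would use the reformulation $\hat{\mathcal{V}}_\beta(x,x_0)=\liminf_{N\to\infty}\frac{-\log_\beta\delta_N}{N}$, where $\delta_N:=\min_{1\le n\le N}\lvert T_\beta^n x-x_0\rvert$; this follows directly from Definition \ref{defexp}, since the existence of an $n\in[1,N]$ with $\lvert T_\beta^n x-x_0\rvert<\beta^{-N\hat v}$ is exactly the statement $\delta_N<\beta^{-N\hat v}$.

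The inclusion $E\subseteq S$ and countability are immediate. If $d_\beta(x)=(\omega,d_\beta(x_0))$ with $\omega\in\Sigma_\beta^n$, then $T_\beta^n x=x_0$, so $\lvert T_\beta^n x-x_0\rvert=0<\beta^{-N\hat v}$ for every $\hat v$ and every $N\ge n$; hence $\hat{\mathcal{V}}_\beta(x,x_0)=\infty$. For countability, a $\beta$-expansion determines its point uniquely through \eqref{E1}, so each inner set is at most a single point; by Theorem \ref{cardinality} each $\Sigma_\beta^n$ is finite, and a countable union of finite sets is countable.

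The substance is the reverse inclusion $S\subseteq E$, i.e. $\hat{\mathcal{V}}_\beta(x,x_0)=\infty\Rightarrow T_\beta^n x=x_0$ for some $n\ge 1$. I would argue by contradiction, assuming $T_\beta^n x\neq x_0$ for all $n\ge1$. First, if $\inf_{n\ge1}\lvert T_\beta^n x-x_0\rvert>0$ then $\delta_N$ is bounded away from $0$ and $\hat{\mathcal{V}}_\beta(x,x_0)=0$, a contradiction; so $\delta_N\downarrow 0$. I then record the successive minima: let $m_1<m_2<\cdots$ be the record times and $r_k:=\lvert T_\beta^{m_k}x-x_0\rvert$ the record values, so that $\delta_N=r_k$ for $m_k\le N<m_{k+1}$, with $r_k\downarrow 0$ and $m_k\to\infty$. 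Evaluating the liminf on each plateau at $N=m_{k+1}-1$ gives $\hat{\mathcal{V}}_\beta(x,x_0)=\liminf_{k\to\infty}\frac{-\log_\beta r_k}{m_{k+1}-1}$.

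The heart of the matter, and the step I expect to be the main obstacle, is a return-time lower bound of the form $m_{k+1}\ge -\log_\beta r_k-C$ for a constant $C=C(\beta)$; equivalently $r_k\ge\beta^{-(m_{k+1}+C)}$. Granting it, each plateau term is at most $\frac{m_{k+1}+C}{m_{k+1}-1}\to 1$, so $\hat{\mathcal{V}}_\beta(x,x_0)\le 1<\infty$, contradicting $x\in S$ and finishing the proof. To prove the return-time bound one sets $u:=T_\beta^{m_k}x$ and notes that $m_{k+1}-m_k$ is the least $j\ge 1$ with $\lvert T_\beta^j u-x_0\rvert<r_k$. Because $T_\beta$ expands admissible cylinders by a factor $\beta$, realizing the closeness $\lvert u-x_0\rvert=r_k$ commits the first $\approx -\log_\beta r_k$ digits of $u$ to a prescribed ``approach pattern'' around $x_0$ (when $u$ and $x_0$ share many leading digits this is literally the expansion of $x_0$; when $x_0$ sits near a cylinder boundary it is instead a block of near-extremal digits), and the orbit cannot re-enter the $r_k$-neighbourhood of $x_0$ before this committed block is consumed. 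Making this precise is where the cylinder machinery enters: Lemma \ref{length} converts closeness into a controlled number of prescribed digits, Propositions \ref{full} and \ref{fullc} control the lengths of the intervening cylinders, and Theorem \ref{Admissible}(2) keeps the lexicographic and metric orders aligned. The delicate subcases are exactly those where $x_0$ lies near a cylinder boundary, or is nearly periodic with small period, since there the naive ``small distance implies many matching digits'' heuristic fails and one must track the near-extremal approach block and the factor-$\beta^{\text{period}}$ expansion at returns separately.
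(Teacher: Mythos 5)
Your setup is correct and efficient: the reformulation $\hat{\mathcal{V}}_\beta(x,x_0)=\liminf_{N}\frac{-\log_\beta\delta_N}{N}$, the record-time decomposition with $\hat{\mathcal{V}}_\beta(x,x_0)=\liminf_k\frac{-\log_\beta r_k}{m_{k+1}-1}$, the inclusion $E\subseteq S$, and the countability argument are all fine. The genuine gap is that your central inequality $m_{k+1}\geq-\log_\beta r_k-C(\beta)$ is not merely ``delicate'' in the subcases you name --- it is \emph{false} there, so no amount of cylinder machinery (Lemma \ref{length}, Propositions \ref{full}, \ref{fullc}) can establish it. Concretely, take $\beta=10$ and $d_\beta(x_0)=1^M2\,w_1w_2\cdots$ (a long block of $1$'s, then $2$, then a tail with digits in $\{1,2\}$), and let $d_\beta(x)=3\,1^{M+1}2\,w_1\cdots w_s\,3^\infty$ with $w_{s+1}\neq3$. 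Then $T_\beta^1x$ agrees with $x_0$ in exactly the first $M$ digits and one checks $r_1=\lvert T_\beta^1x-x_0\rvert\asymp10^{-M}$, while $T_\beta^2x$ agrees with $x_0$ in $M+1+s$ digits, so $\lvert T_\beta^2x-x_0\rvert\leq10^{-(M+1+s)}<r_1$. Hence $m_1=1$ and $m_2=2$, while $-\log_{10}r_1\approx M$ is arbitrarily large: the bound fails for every constant $C$. The mechanism is exactly what your heuristic rules out: when $x_0$ is \emph{almost} periodic (here with period $1$ on its first $M$ digits), the orbit can re-enter and even beat the record after one shift, because the digits of $x$ continue the periodic pattern of $x_0$ past the position where $x_0$'s own digits break it. Taking $x_0$ of Liouville type (longer and longer almost-periods) and repeating this motif along $d_\beta(x)$ makes the violation occur for infinitely many $k$ with unbounded margin, so the inequality cannot be saved even ``for $k$ large enough''. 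Note that your bound \emph{is} true for $x_0=0$ (shifting a block of zeros always increases the distance to $0$), which is why intuition imported from the Bugeaud--Liao case fails precisely on the new cases this lemma is meant to cover.

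What your plateau formula actually requires is only the much weaker statement that $-\log_\beta r_k\leq(1+o(1))\,m_{k+1}$ holds for \emph{infinitely many} $k$, and proving that is the real content of the lemma: one must show that if the record were beaten ``too early'' for all large $k$, the overlapping approach patterns would force $d_\beta(x)$ to coincide eventually with a shifted copy of $d_\beta(x_0)$ (a Fine--Wilf type periodicity-forcing argument), i.e.\ $x$ would lie in the countable set $E$ after all. This is what the paper's proof is organized to capture: it decomposes $d_\beta(x)$ into \emph{disjoint} maximal blocks matching $d_\beta(x_0)$, with $n'_i<m'_i<n'_{i+1}$, passes to a subsequence along which the block lengths are non-decreasing, and then obtains $\hat{\mathcal{V}}_\beta(x,x_0)=\liminf_k(m_k-n_k)/n_{k+1}\leq1$ essentially from the disjointness $m_k<n_{k+1}$. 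Your record times do not see this disjointness (in the example above the two approach blocks overlap), and that is the structural reason your formulation loses the crucial inequality; to complete your proof you would need to add the combinatorial step relating overlapping matches to periodicity of $d_\beta(x_0)$, at which point you would have reproduced the block bookkeeping of the paper.
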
	 

\begin{proof}
   For $\beta>1$ and $x_0\in[0,1]$, we suppose $$d_\beta(x_0)=(\epsilon_1,\epsilon_2,\cdots,\epsilon_n,\cdots).$$ If $x\in\cup^\infty_{n=1}\cup_{\omega\in\Sigma^n_\beta}\left\{x\in[0,1]:d_\beta(x)=(w,d_\beta(x_0))\right\}$, then there exists an integer $n_0$ such that $\lvert T_\beta^{n_0}x-x_0\rvert=0$. Therefore, for any $n\geq n_0$, there is $n_0\in[1,n]$ such that $$\lvert T_\beta^{n_0}x-x_0\rvert=0.$$ Thus, $\hat{\mathcal{V}}_\beta(x,x_0)=\infty$.
   
   Now, we prove $$\left\{x\in[0,1]:\hat{\mathcal{V}}_\beta(x,x_0)=\infty\right\}\subseteq\cup^\infty_{n=1}\cup_{\omega\in\Sigma^n_\beta}\left\{x\in[0,1]:d_\beta(x)=(w,d_\beta(x_0))\right\}.$$ By contrary, for any $x$ with $\hat{\mathcal{V}}_\beta(x,x_0)=\infty$, we suppose $$x\notin\cup^\infty_{n=1}\cup_{\omega\in\Sigma^n_\beta}\left\{x\in[0,1]:d_\beta(x)=(w,d_\beta(x_0))\right\}.$$ Denote the $\beta$-expansion of $x$ by $$x=\dfrac{a_1}{\beta}+\dfrac{a_2}{\beta^2}+\cdots+\dfrac{a_n}{\beta^n}+\cdots,$$ where $a_i\in\{0,\cdots,\lceil\beta\rfloor\}$, for all $i\geq1$. We can take two increasing sequences $\left\{n'_i:i\geq 1\right\}$ and $\left\{m'_i:i\geq1\right\}$ with the properties:
\begin{enumerate}[(1)]
   	\item For every $i\geq1$, one has $$a_{n'_i}>0,\quad a_{n'_i+1}=\epsilon_1,~a_{n'_i+2}=\epsilon_2,~\cdots,~a_{m'_i-1}=\epsilon_{m'_i-n'_i-1},\quad a_{m'_i}>0.$$
   	\item For every $a_n=0$, there is an integer $i$ such that $n'_i<n<m'_i$. 
\end{enumerate}
   By the choice of $\left\{n'_i: i\geq 1\right\}$ and $\left\{m'_i:i\geq1\right\}$, for every $i\geq1$, one has $n'_i<m'_i<n'_{i+1}$. Since $\hat{\mathcal{V}}_\beta(x,x_0)>0$, one has $$\limsup_{i\rightarrow\infty}(m'_i-n'_i)=\infty.$$ Taking $n_1=n'_1$ and $m_1=m'_1$, suppose $m_k,~n_k$ have been defined. Let $i_1=1$ and $i_{k+1}:=\min\{i>i_k:m'_i-n'_i> m_k-n_k\},~{\rm for }~k\geq1.$ Then, define $$n_{k+1}:=n'_{i_{k+1}},\quad m_{k+1}:=m'_{i_{k+1}}.$$ Therefore, the sequence $\{i_k:k\geq1\}$ is well defined. By this way, we obtain the subsequences $\{n_k:k\geq1\}$ and $\{m_k:k\geq1\}$ of $\left\{n'_i:i\geq 1\right\}$ and $\left\{m'_i:i\geq1\right\}$, respectively, such that the sequence $\{m_k-n_k: k\geq1\}$ is non-decreasing. Notice $\beta^{n_k-m_k}<\lvert T^{n_k}_\beta x-x_0\rvert<\beta^{n_k-m_k+1}$, we have $$\hat{\mathcal{V}}_\beta(x,x_0)=\liminf_{k\rightarrow\infty}\dfrac{m_k-n_k}{n_{k+1}}\leq1.$$ This contradicts our assumption $\hat{\mathcal{V}}_\beta(x,x_0)=\infty$. Thus, we have proved $$\left\{x\in[0,1]:\hat{\mathcal{V}}_\beta(x,x_0)=\infty\right\}\subseteq\cup^\infty_{n=1}\cup_{\omega\in\Sigma^n_\beta}\left\{x\in[0,1]:d_\beta(x)=(w,d_\beta(x_0))\right\}.$$ Therefore, $$\left\{x\in[0,1]:\hat{\mathcal{V}}_\beta(x,x_0)=\infty\right\}=\cup^\infty_{n=1}\cup_{\omega\in\Sigma^n_\beta}\left\{x\in[0,1]:d_\beta(x)=(w,d_\beta(x_0))\right\},$$ which implies that the set $\left\{x\in[0,1]:\hat{\mathcal{V}}_\beta(x,x_0)=\infty\right\}$ is countable.
\end{proof}   

\begin{Lemma}\label{empty}
	The set $\{x\in[0,1]:1<\hat{\mathcal{V}}_\beta(x,x_0)<\infty\}$ is empty.
\end{Lemma}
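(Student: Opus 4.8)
The plan is to argue by contradiction and to reduce, via Lemma \ref{infinite}, to orbits that never hit $x_0$ exactly; for those one shows directly that the uniform exponent cannot exceed $1$, so that no value in the open interval $(1,\infty)$ is attainable.

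Suppose, for contradiction, that some $x\in[0,1]$ satisfies $1<\hat{\mathcal{V}}_\beta(x,x_0)<\infty$. Because the value is finite, Lemma \ref{infinite} guarantees that $x$ does not lie in the countable set $\bigcup_{n\ge1}\bigcup_{\omega\in\Sigma^n_\beta}\{x:d_\beta(x)=(\omega,d_\beta(x_0))\}$; equivalently, $\lvert T_\beta^n x-x_0\rvert>0$ for every $n\ge 1$. I would then run exactly the block decomposition constructed in the proof of Lemma \ref{infinite}. Writing $d_\beta(x)=(a_1,a_2,\dots)$ and $d_\beta(x_0)=(\epsilon_1,\epsilon_2,\dots)$, one records the positions $\{n'_i\}$ at which a nonzero digit is followed by a maximal run copying the leading digits of $x_0$ up to position $m'_i$, and passes to the record subsequence $\{n_k\},\{m_k\}$ along which $\{m_k-n_k\}$ is non-decreasing. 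Using the estimate $\beta^{n_k-m_k}<\lvert T_\beta^{n_k}x-x_0\rvert<\beta^{n_k-m_k+1}$ and the fact that between two consecutive records the best approximation with index at most $N$ is the earlier record, this yields
$$\hat{\mathcal{V}}_\beta(x,x_0)=\liminf_{k\to\infty}\frac{m_k-n_k}{n_{k+1}}.$$

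Now I would invoke the key structural inequality $n_k<m_k<n_{k+1}$, valid for every $k$ by the construction of the records (since $m_k=m'_{i_k}<n'_{i_k+1}\le n'_{i_{k+1}}=n_{k+1}$). It gives
$$\frac{m_k-n_k}{n_{k+1}}<\frac{m_k-n_k}{m_k}=1-\frac{n_k}{m_k}<1,$$
so the liminf above is at most $1$ and therefore $\hat{\mathcal{V}}_\beta(x,x_0)\le 1$. This contradicts the standing assumption $\hat{\mathcal{V}}_\beta(x,x_0)>1$, and hence no such $x$ exists; equivalently, $\hat{\mathcal{V}}_\beta(x,x_0)$ takes values only in $[0,1]\cup\{\infty\}$, so the set in the statement is empty.

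The step that requires genuine care, and the main obstacle, is the justification of the displayed identity for $\hat{\mathcal{V}}_\beta(x,x_0)$. One must convert digit agreement into the metric distance $\lvert T_\beta^n x-x_0\rvert$ uniformly in $n$, which relies on the full-cylinder machinery (Propositions \ref{full} and \ref{fullc}) together with the length bounds of Lemma \ref{length}, and one must check that no index $n\le N$ lying strictly between two records yields a strictly better approximation than the current record; this is precisely what pins the worst case down to $N\approx n_{k+1}$ and produces the denominator $n_{k+1}$. Everything else is the short arithmetic above.
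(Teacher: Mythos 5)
Your proposal is correct and follows essentially the same route as the paper: the paper's proof of Lemma \ref{empty} is simply the observation that, by the argument in the proof of Lemma \ref{infinite}, any $x$ whose orbit never hits $x_0$ exactly satisfies $\hat{\mathcal{V}}_\beta(x,x_0)\leq\liminf_{k\to\infty}(m_k-n_k)/n_{k+1}\leq 1$, so finite values above $1$ are impossible. Your write-up just makes this reduction explicit (using Lemma \ref{infinite} to exclude the countable exceptional set and then re-running its block decomposition), which is exactly what the paper intends by \textquotedblleft This follows from the proof of Lemma \ref{infinite}\textquotedblright.
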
 

\begin{proof}
   This follows from the proof of Lemma \ref{infinite}.
\end{proof}

   For $\beta>1$ and $x_0\in[0,1]$, we suppose $$d_\beta(x_0)=(\epsilon_1,\epsilon_2,\cdots,\epsilon_n,\cdots).$$ For the case $\mathcal{V}_\beta(x,x_0)\in(0,\infty)$ and $\hat{\mathcal{V}}_\beta(x,x_0)\in(0,1)$, we have the following discussion and complete the proof of Theorem \ref{A}.
   
\paragraph{{\bf Upper bound}} 
   For any $x\in[0,1]$, denote its $\beta$-expansion by  $$x=\dfrac{a_1}{\beta}+\dfrac{a_2}{\beta^2}+\cdots+\dfrac{a_n}{\beta^n}+\cdots.$$ Since $\hat{\mathcal{V}}_\beta(x,x_0)\in(0,1)$, by the same way as Lemma \ref{infinite}, we can take the maximal subsequences $\{n_k:k\geq1\}$ and $\{m_k:k\geq1\}$ of $\left\{n'_i:i\geq 1\right\}$ and $\left\{m'_i:i\geq1\right\}$, respectively. Similarly, notice that $$\beta^{n_k-m_k}<\lvert T^{n_k}_\beta x-x_0\rvert<\beta^{n_k-m_k+1}.$$ We have 
\begin{equation}\label{E2}
   \mathcal{V}_\beta(x,x_0)=\limsup_{n\rightarrow\infty}\dfrac{m_k-n_k}{n_k}=\limsup_{n\rightarrow\infty}\dfrac{m_k}{n_k}-1,
\end{equation} 

\begin{equation}\label{E3}
   \hat{\mathcal{V}}_\beta(x,x_0)\leq\liminf_{n\rightarrow\infty}\dfrac{m_k-n_k}{n_{k+1}}\leq\liminf_{n\rightarrow\infty}\dfrac{m_k-n_k}{m_k}=1-\limsup_{n\rightarrow\infty}\dfrac{n_k}{m_k}.
\end{equation}
   
   Since $\left(\limsup\dfrac{n_k}{m_k}\right)\cdot\left(\limsup\dfrac{m_k}{n_k}\right)\geq 1$, one has 
\begin{equation}\label{in1}
   \mathcal{V}_\beta(x,x_0)\geq\dfrac{\hat{\mathcal{V}}_\beta(x,x_0)}{1-\hat{\mathcal{V}}_\beta(x,x_0)},\qquad \hat{\mathcal{V}}_\beta(x,x_0)\leq\dfrac{\mathcal{V}_\beta(x,x_0)}{1+\mathcal{V}_\beta(x,x_0)}.
\end{equation}
   We can derive from \ref{in1} that if $v<\hat{v}/(1-\hat{v})$, then the set $$\left\{x\in[0,1]:\mathcal{V}_\beta(x,x_0)=v\right\}\cap\left\{x\in[0,1]:\hat{\mathcal{V}}_\beta(x,x_0)\geq\hat{v}\right\}$$ is empty. Otherwise, under the case where $\mathcal{V}_\beta(x,x_0)=v$ and $\hat{\mathcal{V}}_\beta(x,x_0)=\hat{v}$, take the two sequences $\{n_k:k\geq1\}$ and $\{m_k:k\geq1\}$ such that $$\mathcal{V}_\beta(x,x_0)=\lim_{n\rightarrow\infty}\dfrac{m_k-n_k}{n_k},\qquad \hat{\mathcal{V}}_\beta(x,x_0)\leq\liminf_{n\rightarrow\infty}\dfrac{m_k-n_k}{n_{k+1}}.$$ Given $0<\varepsilon<\hat{v}/2$, for $k$ large enough, one has
\begin{equation}\label{E4}
   (v-\varepsilon)n_k\leq m_k-n_k\leq(v+\varepsilon)n_k,
\end{equation}   
\begin{equation}\label{E5}
   m_k-n_k\geq(\hat{v}-\varepsilon)n_{k+1}.
\end{equation} 
   By inequality (\ref{E4}), one has $$(1+v-\varepsilon)m_{k-1}\leq(1+v-\varepsilon)n_k\leq m_k.$$ Therefore, the sequence $\{m_k:k\geq1\}$ increases at least exponentially. Since $n_k\geq m_{k-1}$ for every $k\geq2$, the sequence $\{n_k:k\geq1\}$ also increases at least exponentially. Thus, there is a positive constant $C$ such that $k\leq C\log_\beta n_k$. Combining (\ref{E4}) and (\ref{E5}), one obtains $$(\hat{v}-\varepsilon)n_{k+1}\leq(v+\varepsilon)n_k.$$ Thus, for $k$ large enough, there is an integer $n_0$ and a postive real number $\varepsilon_1$ small enough such that the sum of all lengths of the blocks of $0$ in the prefix of length $n_k$ of the infinite sequence $a_1a_2\cdots$ is at least equal to  
\begin{eqnarray*}
   n_k(\hat{v}-\varepsilon)\left\lgroup1+\dfrac{\hat{v}-\varepsilon}{v+\varepsilon}+\dfrac{(\hat{v}-\varepsilon)^2}{(v+\varepsilon)^2}+\cdots\right\rgroup-n_0&=&n_k\dfrac{(\hat{v}-\varepsilon)(v+\varepsilon)}{v-\hat{v}+2\varepsilon}-n_0\\ &\geq& n_k\left(\dfrac{v \cdot \hat{v}}{v-\hat{v}}-\varepsilon_1\right).
\end{eqnarray*}   
   Among the digits $a_1\cdots a_{m_k}$, there are $k$ blocks of digits which are \textquoteleft free\textquoteright. Denote their lengths by $l_1,\cdots,l_k$. There is an small number $\varepsilon_2$ such that $$\sum_{i=1}^kl_i\leq n_k-n_k\left(\dfrac{v\cdot \hat{v}}{v-\hat{v}}-\varepsilon_1\right)= n_k(1+\varepsilon_2)\dfrac{v-\hat{v}-v\cdot\hat{v}}{v-\hat{v}}.$$ By Theorem \ref{cardinality}, there are at most $\beta\cdot\beta^{l_i}/(\beta-1)$ ways to choose the block with length $l_i$. Thus, one has in total at most $$\left(\dfrac{\beta}{\beta-1}\right)^k\cdot\beta^{\sum_{i=1}^kl_i}\leq \left(\dfrac{\beta}{\beta-1}\right)^k\cdot\beta^{n_k(1+\varepsilon_2)(v-\hat{v}-v\cdot\hat{v})/(v-\hat{v})}$$ possible choices of the digits $a_1\cdots a_{m_k}$. On the other hand, there are at most $k (k\leq C\log_\beta n_k)$ blocks of $0$ in the prefix of length $n_k$ of the infinite sequence $a_1a_2\cdots$. Since there are at most $n_k$ possible choices for their first index, one has in total at most $(n_k)^{C\log_\beta n_k}$ possible choices. Consequently, the set $$\left\{x\in[0,1]:\mathcal{V}_\beta(x,x_0)= v\right\}\cap\left\{x\in[0,1]:\hat{\mathcal{V}}_\beta(x,x_0)=\hat{v}\right\}$$ is covered by $$\left(\dfrac{\beta n_k}{\beta-1}\right)^{C\log_\beta n_k}\cdot\beta^{n_k(1+\varepsilon_2)(v-\hat{v}-v\cdot\hat{v})/(v-\hat{v})}$$ basic intervals of length at most $\beta^{-m_k}$. Moreover, by (\ref{E4}), there is a small number $\varepsilon_3>0$ such that $$\beta^{-m_k}\leq \beta^{-(1+v)(1-\varepsilon_3)n_k}.$$ Take $\varepsilon'=\max\{\varepsilon_2,~\varepsilon_3\}$, we consider the series $$\sum_{N\geq1}(N)^{C\log_\beta N}\beta^{N(1+\varepsilon')(v-\hat{v}-v\cdot \hat{v})/(v-\hat{v})}\beta^{-(1+v)(1-\varepsilon')Ns}.$$ The critical exponent $s_0$ such that the series converges if $s>s_0$ and diverges if $s<s_0$ is given by $$s_0=\dfrac{1+\varepsilon'}{1-\varepsilon'}\cdot\dfrac{v-\hat{v}-v\cdot\hat{v}}{(1+v)(v-\hat{v})}.$$ 
   
   By a standard covering argument and the arbitrariness of $\varepsilon'$, the Hausdorff dimension of the set $$\left\{x\in[0,1]:\mathcal{V}_\beta(x,x_0)= v\right\}\cap\left\{x\in[0,1]:\hat{\mathcal{V}}_\beta(x,x_0)=\hat{v}\right\}$$ is at most equal to $$\dfrac{v-\hat{v}-v\cdot\hat{v}}{(1+v)(v-\hat{v})}.$$
   
\paragraph{{\bf Lower bound}} 
   To obtain the lower bound, we will construct a suitable Cantor type set. For $v\in(0,\infty)$ and $\hat{v}\in(0,1)$ with $v\geq\hat{v}/(1-\hat{v})$, let $$n'_k=\left\lfloor\left(\dfrac{v}{\hat{v}}\right)^k\right\rfloor,\quad m'_k=\lfloor(1+v)n'_k\rfloor,\quad k=1,2,\cdots.$$ Making an adjustment, we can choose two subsequences $\{n_k\}$ and $\{m_k\}$ with $n_k<m_k<n_{k+1}$ for every $k\geq1$ such that $\{m_k-n_k\}$ is a non-decreasing sequence and 
\begin{equation}\label{lowle}
   \lim_{k\rightarrow\infty}\dfrac{m_k-n_k}{n_k}=v,\qquad \lim_{k\rightarrow\infty}\dfrac{m_k-n_k}{n_{k+1}}=\hat{v}.
\end{equation}   
   Recall that the $\beta$-expansion of the fixed point $x_0$ is $$d_\beta(x_0)=(\epsilon_1,\epsilon_2,\cdots,\epsilon_n,\cdots).$$ Consider the set of real numbers $x\in[0,1]$ whose $\beta$-expansion $$x=\dfrac{a_1}{\beta}+\dfrac{a_2}{\beta^2}+\cdots+\dfrac{a_n}{\beta^n}+\cdots,$$ satisfies that for all $k\geq1$, $$a_{n_k}>1,~a_{n_k+1}=\epsilon_1,~a_{n_i+2}=\epsilon_2,~\cdots,~a_{m_i-1}=\epsilon_{m_i-n_i-1},~a_{m_k}>0,$$ $$a_{m_k+(m_k-n_k)}=a_{m_k+2(m_k-n_k)}=\cdots=a_{m_k+t_k(m_k-n_k)}=1,$$ where $t_k$ is the largest integer such that $m_k+t_k(m_k-n_k)<n_{k+1}$. Then, $$t_k\leq\dfrac{n_{k+1}-m_k}{m_k-n_k}\leq\dfrac{2}{\hat{v}},$$ for $k$ large enough. Therefore, the sequence $\{t_k:k\geq1\}$ is bounded. Fix $N$, let $\beta_N$ be the real number defined by the infinite $\beta$-expansion of $1$ as equality (\ref{ED1}). We replace the digit $1$ for $a_{n_k},~a_{m_k}$ and $a_{m_k+i(m_k-n_k)}$ for any $1\leq i\leq t_k$ by the block $0^N10^N$. Fill other places by blocks belonging to $\Sigma_{\beta_N}$. Thus, we have constructed the Cantor type subset $E$. Since $\{t_k\}$ is bound, one has $$\lim_{k\rightarrow\infty}\dfrac{m_k-n_k-1+2N}{n_k+(4k-2)N+\sum_{i=1}^{k-1}2Nt_i}=\lim_{k\rightarrow\infty}\dfrac{m_k-n_k}{n_k}=v,$$ $$\lim_{k\rightarrow\infty}\dfrac{m_k-n_k-1+2N}{n_{k+1}+(4k+2)N+\sum_{i=1}^k2Nt_i}=\lim_{k\rightarrow\infty}\dfrac{m_k-n_k}{n_{k+1}}=\hat{v}.$$ According to the construction, the sequence $d_\beta(x)$ is in $\Sigma_{\beta_N}.$
   
   We distribute the mass uniformly when meet a block in $\Sigma_{\beta_N}$ and keep the mass when go through the positions where the digits are determined by construction of $E$. The Bernoulli measure $\mu$ on $E$ is defined as follows.
   
   If $n<n_1$, then define $\mu(I_n)=1/\sharp\Sigma^n_{\beta_N}$. If $n_1\leq n\leq m_1+4N$, then define $\mu(I_n)=1/\sharp\Sigma^{n_1-1}_{\beta_N}$. If there is an integer $t$ with $0\leq t\leq t_1-1$ such that $$m_1+4N+(t+1)(m_1-n_1)+2Nt<n\leq m_1+4N+(t+1)(m_1-n_1)+2N(t+1),$$ then define $$\mu(I_n)=\dfrac{1}{\sharp\Sigma^{n_1-1}_{\beta_N}}\cdot\dfrac{1}{\left(\sharp\Sigma^{m_1-n_1-1}_{\beta_N}\right)^{t+1}}.$$ If there is an integer $t$ with $0\leq t\leq t_1$ such that $$m_1+4N+t(m_1-n_1)+2Nt<n\leq c,$$ where $c:=\min\{n_2+4N+2Nt_1, m_1+4N+(t+1)(m_1-n_1)+2Nt\}$, then define $$\mu(I_n)=\dfrac{1}{\sharp\Sigma^{n_1-1}_{\beta_N}}\cdot\dfrac{1}{\left(\sharp\Sigma^{m_1-n_1-1}_{\beta_N}\right)^t}\cdot\dfrac{1}{\sharp\Sigma^{n-(m_1+4N+t(m_1-n_1)+2Nt)}_{\beta_N}}.$$
   
   For $k\geq2$, let $$l_k:=n_k+4(k-1)N+\sum^{k-1}_{i=1}2Nt_i,\quad h_k:=m_k+4kN+\sum^{k-1}_{i=1}2Nt_i,$$ $$p_k:=m_k-n_k-1,\quad q_k:=h_k+t_k(m_k-n_k)+2Nt_k.$$ If $l_k\leq n\leq h_k$, then define $$\mu(I_n)=\dfrac{1}{\sharp\Sigma^{n_1-1}_{\beta_N}}\cdot\dfrac{1}{\prod^{k-1}_{i=1}\left(\sharp \Sigma^{p_i}_{\beta_N}\right)^{t_i}\cdot\left(\sharp\Sigma^{l_{i+1}-q_i-1}_{\beta_N}\right)}=\mu(I_{l_k})=\mu(I_{h_k}).$$ If there is an integer $t$ with $0\leq t\leq t_k-1$ such that $$h_k+(t+1)(m_k-n_k)+2Nt<n\leq h_k+(t+1)(m_k-n_k)+2N(t+1),$$ then define $$\mu(I_n)=\mu(I_{h_k})\cdot\dfrac{1}{\left(\sharp\Sigma^{p_k}_{\beta_N}\right)^{t+1}}.$$ If there is an integer $t$ with $0\leq t\leq t_k$ such that $$h_k+t(m_k-n_k)+2Nt<n\leq\min\{l_{k+1}, h_k+(t+1)(m_k-n_k)+2Nt\},$$ then define $$\mu(I_n)=\mu(I_{h_k})\cdot\dfrac{1}{\left(\sharp\Sigma^{p_k}_{\beta_N}\right)^t}\cdot\dfrac{1}{\sharp\Sigma^{n-(h_k+t(m_k-n_k)+2Nt)}_{\beta_N}}.$$
   
   By the construction and Proposition \ref{full}, $I_{h_k}$ is full. For calculating the local dimension of $\mu$, we discuss different cases as follows.
   
   {\bf Case $A$:} If $n=h_k$, then
\begin{eqnarray*}
   \liminf_{k\rightarrow\infty}\dfrac{\log_\beta\mu(I_{h_k})}{\log_\beta \lvert I_{h_k}\rvert}&=&\liminf_{k\rightarrow\infty}\dfrac{n_1-1+\sum\limits_{i=1}^{k-1}\left(t_ip_i+l_{i+1}-q_i-1\right)}{h_k}\cdot\log_\beta\beta_N\\&=&\liminf_{k\rightarrow\infty}\dfrac{n_1-1+\sum\limits_{i=1}^{k-1}\left(l_{i+1}-h_i-2Nt_i-1\right)}{h_k}\cdot\log_\beta\beta_N.
\end{eqnarray*} 
   Recall that $\{t_k:k\geq1\}$ is bounded and $\{m_k:k\geq1\}$ grows exponentially fast in terms of $k$, therefore, $$\liminf_{k\rightarrow\infty}\dfrac{\log_\beta\mu(I_{h_k})}{\log_\beta \lvert I_{h_k}\rvert}=\liminf_{k\rightarrow\infty}\dfrac{\sum_{i=1}^{k-1}\left(n_{i+1}-m_i\right)}{m_k}\log_\beta\beta_N.$$ By equalities (\ref{lowle}), one has $$\lim_{k\rightarrow\infty}\dfrac{m_k}{n_k}=1+v,\quad \lim_{k\rightarrow\infty}\dfrac{m_{k+1}}{m_k}=\dfrac{v}{\hat{v}},\quad\lim_{k\rightarrow\infty}\dfrac{n_{k+1}}{m_k}=\dfrac{v}{\hat{v}(1+v)}.$$ According to Stolz-Ces\`{a}ro Theorem,
\begin{eqnarray*}
   	\liminf_{k\rightarrow\infty}\dfrac{\sum_{i=1}^{k-1}\left(n_{i+1}-m_i\right)}{m_k}&=&\liminf_{k\rightarrow\infty}\dfrac{n_{k+1}-m_k}{m_{k+1}-m_k}\\&=&\liminf_{k\rightarrow\infty}\dfrac{\dfrac{n_{k+1}}{m_k}-1}{\dfrac{m_{k+1}}{m_k}-1}=\dfrac{v-\hat{v}-v\cdot\hat{v}}{(1+v)(v-\hat{v})}.
\end{eqnarray*} 
   Thus, $$\liminf_{k\rightarrow\infty}\dfrac{\log_\beta\mu(I_{h_k})}{\log_\beta\lvert I_{h_k}\rvert}=\dfrac{v-\hat{v}-v\cdot\hat{v}}{(1+v)(v-\hat{v})}\cdot\log_\beta\beta_N.$$
   
   {\bf Case $B$:} For an integer $n$ large enough, if there is $k\geq2$ such that $l_k\leq n\leq h_k$, then $$\liminf_{k\rightarrow\infty}\dfrac{\log_\beta\mu(I_n)}{\log_\beta\lvert I_n\rvert}\geq \liminf_{k\rightarrow\infty}\dfrac{\log_\beta\mu(I_n)}{\log_\beta\lvert I_{h_k}\rvert}=\liminf_{k\rightarrow\infty}\dfrac{\log_\beta\mu(I_{h_k})}{\log_\beta\lvert I_{h_k}\rvert}.$$
   
   {\bf Case $C$:} For an integer $n$ large enough, if there is an integer $t$ with $0\leq t\leq t_k-1$ such that $$h_k+(t+1)(m_k-n_k)+2Nt<n\leq h_k+(t+1)(m_k-n_k)+2N(t+1),$$ then one has $$\mu(I_n)\leq\mu(I_{h_k})\cdot\beta^{-(t+1)p_k}_N.$$ Since $I_{h_k}$ is full, by Proposition \ref{fullc}, $\lvert I_n\rvert=\lvert I_{h_k}\rvert\cdot\lvert I_{n-h_k}(\omega')\rvert$, where $\omega'$ is an admissible block in $\Sigma^{n-h_k}_{\beta_N}$. By Lemma \ref{length}, $$\lvert I_n\rvert\geq\lvert I_{h_k}\rvert\cdot\beta^{-(n-h_k+N)}.$$ Hence, $$\dfrac{-\log_\beta\mu(I_n)}{-\log_\beta\lvert I_n\rvert}\geq\dfrac{-\log_\beta\mu(I_{h_k})+(t+1)p_k\log_\beta\beta_N}{-\log_\beta\lvert I_{h_k}\rvert+((t+1)p_k+N(2t+1))}\geq\dfrac{-\log_\beta\mu(I_{h_k})}{-\log_\beta\lvert I_{h_k}\rvert}\cdot\varphi(N),$$ where $\varphi(N)<1$ and $\varphi(N)$ tends to $1$ as $N$ tends to infinity. If there is an integer $t$ with $0\leq t\leq t_k$ such that $$h_k+t(m_k-n_k)+2Nt<n\leq\min\{l_{k+1}, h_k+(t+1)(m_k-n_k)+2Nt\},$$ then letting $l:=n-(h_k+t(m_k-n_k)+2Nt)$, one has $$\mu(I_n)\leq\mu(I_{h_k})\cdot\beta^{-tp_k-l}_N.$$ Since $I_{h_k}$ is full, by Proposition \ref{fullc}, $\lvert I_n\rvert=\lvert I_{h_k}\rvert\cdot\lvert I_{n-h_k}(\omega')\rvert$, where $\omega'$ is an admissible block in $\Sigma^{n-h_k}_{\beta_N}$. By Lemma \ref{length}, $\lvert I_{n-h_k}(\omega')\rvert\geq\beta^{-(n-h_k+N)}.$ Therefore, $$\lvert I_n\rvert\geq\lvert I_{h_k}\rvert\cdot\beta^{-(n-h_k+N)}.$$ Hence, $$\dfrac{-\log_\beta\mu(I_n)}{-\log_\beta\lvert I_n\rvert}\geq\dfrac{-\log_\beta\mu(I_{h_k})+(tp_k+l)\log_\beta\beta_N}{-\log_\beta\lvert I_{h_k}\rvert+(tp_k+l+t+N(2t+1))}\geq\dfrac{-\log_\beta\mu(I_{h_k})}{-\log_\beta\lvert I_{h_k}\rvert}\cdot\varphi(N).$$
   
   Therefore, in all cases, $$\liminf_{k\rightarrow\infty}\dfrac{\log_\beta\mu(I_n)}{\log_\beta\lvert I_n\rvert}\geq\dfrac{v-\hat{v}-v\cdot\hat{v}}{(1+v)(v-\hat{v})}\cdot\log_\beta\beta_N\cdot\varphi(N).$$ Given a point $x\in E$, let $r$ be a number with $\lvert I_{n+1}(x)\rvert\leq r<\lvert I_n(x)\rvert$. We consider the ball $B(x,r)$. By Lemma \ref{length}, every $n$-th order basic interval $I_n$ satisfies $\lvert I_n\rvert\geq \beta^{-(n+N)}$. Hence, the ball $B(x,r)$ interests at most $\lfloor 2\beta^N\rfloor+2$ basic intervals of order $n$. On the other hand, $$r\geq\lvert I_{n+1}(x)\rvert\geq\beta^{-(n+1+N)}=\beta^{-(1+N)}\cdot\beta^{-n}\geq\beta^{-(1+N)}\cdot\lvert I_n(x)\rvert.$$ Therefore, $$\liminf_{r\rightarrow0}\dfrac{\log_\beta\mu(B(x,r))}{\log_\beta r}=\liminf_{n\rightarrow\infty}\dfrac{\log_\beta\mu(I_n(x))}{\log_\beta\lvert I_n(x)\rvert}.$$ Let $N$ tend to infinity, by Mass Distribution Principle \cite[p.p. 60]{FK90}, we get the lower bound $$\dfrac{v-\hat{v}-v\cdot\hat{v}}{(1+v)(v-\hat{v})}.$$
   
   Hence, the proof of Theorem \ref{A} is complete. Now, we prove Theorem \ref{B}. 
\begin{proof}[Proof of Theorem \ref{B}]
   If $\hat{\mathcal{V}}_\beta(x,x_0)=0$, by Lemma \ref{spezero}, the set $$\left\{x\in[0,1]:\hat{\mathcal{V}}_\beta(x,x_0)=0\right\}$$ is of full Lebesgue measure. If $\hat{\mathcal{V}}_\beta(x,x_0)>1$, by Lemma \ref{infinite} and Lemma \ref{empty}, the set $$\left\{x\in[0,1]:\hat{\mathcal{V}}_\beta(x,x_0)>1\right\}=\left\{x\in[0,1]:\hat{\mathcal{V}}_\beta(x,x_0)=\infty\right\}$$ is countable. 
   
   If $\hat{v}\in(0,1)$, for any $v\geq\hat{v}/(1-\hat{v})$ and any positive integer $L$ large enough, by the similar discussion with upper bound in the proof of Theorem \ref{A}, the Hausdorff dimension of the set $$\left\{x\in[0,1]:v\leq\mathcal{V}_\beta(x,x_0)< v+1/L\right\}\cap\left\{x\in[0,1]:\hat{\mathcal{V}}_\beta(x,x_0)=\hat{v}\right\}$$ is at most equal to $$\dfrac{v-\hat{v}-v\cdot\hat{v}}{(1+v)(v-\hat{v})}+\dfrac{\hat{v}^2}{L(1-\hat{v})}.$$ Let $L$ tend to $\infty$, regard the formula as a function of $v$ with $v\geq\hat{v}/(1-\hat{v})$, the maximum is attained for $v=2\hat{v}/(1-\hat{v})$. Thus, $${\rm dim}_H\left( \left\{x\in[0,1]:\hat{\mathcal{V}}_\beta(x,x_0)=\hat{v}\right\}\right)\leq\left(\dfrac{1-\hat{v}}{1+\hat{v}}\right)^2.$$ On the other hand, from the similar discussion with lower bound in the proof of Theorem \ref{A}, we also have $${\rm dim}_H\left( \left\{x\in[0,1]:\hat{\mathcal{V}}_\beta(x,x_0)=\hat{v}\right\}\right)\geq\left(\dfrac{1-\hat{v}}{1+\hat{v}}\right)^2.$$ Thus, $${\rm dim}_H\left( \left\{x\in[0,1]:\hat{\mathcal{V}}_\beta(x,x_0)=\hat{v}\right\}\right)=\left(\dfrac{1-\hat{v}}{1+\hat{v}}\right)^2.$$ 
\end{proof}

\section{Proofs of Theorems \ref{C} and \ref{D}}
   
   Following the approach of Persson and Schmeling \cite{PS08}, we take a correspondence between the $\beta$-shift and the parameter. Parry \cite[Lemma 2]{P1960} characterized the $\beta$-expansion of $1$.
\begin{Theorem}\label{adone}
	A sequence $(\omega_1,\omega_2,\cdots,\omega_n,\cdots)$ is the $\beta$-expansion of $1$ for some $\beta>1$ if and only if it is self-admissible.
\end{Theorem}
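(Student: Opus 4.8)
The plan is to prove the two implications separately, using the order-preserving property of $\beta$-expansions (Theorem \ref{Admissible}(2)) for the easy direction and a power-series inversion for the hard one.

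For necessity, suppose $\omega=(\omega_n)_{n\ge1}$ equals the $\beta$-expansion $d_\beta(1)$ of $1$ for some $\beta>1$. The first observation is that shifting the expansion corresponds to iterating $T_\beta$: since $\omega_n=\lfloor\beta T_\beta^{n-1}1\rfloor$, the digits of $T_\beta^k1$ are $\lfloor\beta T_\beta^{j-1}(T_\beta^k1)\rfloor=\omega_{k+j}$, so $\sigma^k\omega=d_\beta(T_\beta^k1)$ for every $k\ge0$. Because $T_\beta$ maps into $[0,1)$ we have $T_\beta^k1\le 1$, and Theorem \ref{Admissible}(2) then gives $\sigma^k\omega=d_\beta(T_\beta^k1)\le_{lex}d_\beta(1)=\omega$. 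Thus $\omega$ is self-admissible. When $d_\beta(1)$ is finite the same argument applies, the relevant tail being the zero sequence $d_\beta(0)$.

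For sufficiency, let $\omega$ be self-admissible and not identically zero; then its largest digit is $\omega_1\ge1$, since $\sigma^{k-1}\omega\le_{lex}\omega$ forces $\omega_k\le\omega_1$ for all $k$. I would first produce the base. Consider $f(z)=\sum_{n\ge1}\omega_n z^{-n}$; the bound $\omega_n\le\omega_1$ makes $f$ finite, continuous and strictly decreasing on $(1,\infty)$, with $f(z)\to0$ as $z\to\infty$ and $\lim_{z\to1^+}f(z)=\sum_n\omega_n\ge\omega_1\ge1$ by monotone convergence. Hence there is a unique $\beta>1$ with $f(\beta)=1$ (the degenerate sequence $(1,0,0,\dots)$, whose only solution is $z=1$, being excluded). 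It then remains to check that the greedy algorithm for $1$ in base $\beta$ reproduces $\omega$, i.e. $\lfloor\beta T_\beta^{n-1}1\rfloor=\omega_n$ for all $n$.

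I would prove this last point by induction, tracking the remainders $r_k:=\sum_{n\ge1}\omega_{k+n}\beta^{-n}=V_\beta(\sigma^k\omega)$, so that $r_0=f(\beta)=1$ and $\beta r_k=\omega_{k+1}+r_{k+1}$. The induction step needs $r_k\in[0,1)$ for $k\ge1$, for then $\omega_{k+1}\le\beta r_k<\omega_{k+1}+1$ identifies $\omega_{k+1}$ as the greedy digit and $r_k$ as $T_\beta^k1$. The inequality $r_k\le1$ is exactly the statement that the lexicographic relation $\sigma^k\omega\le_{lex}\omega$ transfers to the numerical values $V_\beta(\sigma^k\omega)\le V_\beta(\omega)=1$. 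This transfer is the main obstacle: for arbitrary digit strings the lexicographic order need not respect base-$\beta$ value, and the naive tail estimate fails because $\beta$ may be smaller than $\omega_1+1$. The resolution uses that $\omega$ dominates all of its shifts simultaneously — this \emph{maximality} is precisely the self-admissibility condition — so that at every place where a shift drops below $\omega$ the subsequent digits are themselves constrained; a summation-by-parts argument along the successive agreement blocks then yields $V_\beta(\sigma^k\omega)\le1$, with strictness $r_k<1$ following except in the exceptional eventually-periodic (simple-Parry) configuration, which one checks directly. Once $r_k\in[0,1)$ is established for all $k\ge1$, the greedy recovery is immediate, completing the proof.
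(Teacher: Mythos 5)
The paper itself does not prove this statement: it is quoted directly from Parry (\cite[Lemma 2]{P1960}), so there is no internal argument to compare yours against, and your proposal must stand on its own. The necessity half is fine, as is the construction of $\beta$ as the unique root of $\sum_{n\geq1}\omega_nz^{-n}=1$. The gap is in the sufficiency half, at precisely the step you yourself flag as ``the main obstacle''. The transfer from lexicographic domination to numerical domination is the entire mathematical content of Parry's lemma, and you name a technique (``a summation-by-parts argument along the successive agreement blocks'') without carrying it out; as written, the key lemma is invoked rather than proved. The non-strict bound $r_k=V_\beta(\sigma^k\omega)\leq1$ can indeed be obtained along such lines (set $M=\sup_k V_\beta(\sigma^k\omega)$, which is finite since the digits are bounded by $\omega_1$; at the first index where $\sigma^k\omega$ drops strictly below $\omega$ one gets $V_\beta(\sigma^k\omega)\leq 1+\beta^{-1}(M-1)$, whence $M\leq1$), but nothing of this kind appears in your text.

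More seriously, the strict bound $r_k<1$, which your greedy induction genuinely requires, is \emph{false} under the paper's definitions, and the exceptional cases cannot be ``checked directly'' in your favour: they are counterexamples, not removable boundary cases. Take $\omega=(2,1,2,1,2,1,\cdots)$. It is self-admissible in the paper's sense ($\sigma^{2k}\omega=\omega$ and $\sigma^{2k+1}\omega<_{lex}\omega$), and the unique root of $\sum_n\omega_nz^{-n}=1$ is $\beta=1+\sqrt{3}$. But then $r_1=\beta-2$, $\beta r_1=2$, so $r_2=1$, the induction hypothesis $r_2\in[0,1)$ fails, and indeed $d_\beta(1)=(2,2,0,0,\cdots)\neq\omega$. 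Since the value equation forces $\beta=1+\sqrt{3}$, the sequence $\omega$ is not the greedy $\beta$-expansion of $1$ for \emph{any} $\beta$; the all-ones sequence exhibits the same phenomenon with $\beta=2$. These periodic self-admissible sequences are exactly the quasi-greedy (``infinite'') expansions $\varepsilon^\ast(\beta)$ of simple Parry numbers, and they show that the statement read literally, with non-strict $\leq_{lex}$ and the paper's greedy definition of $d_\beta(1)$, is false at the boundary. Parry's lemma is correct either with strict self-admissibility ($\sigma^k\omega<_{lex}\omega$ for all $k\geq1$), characterizing the greedy expansions $d_\beta(1)$, or with non-strict $\leq_{lex}$, characterizing the infinite expansions $\varepsilon^\ast(\beta)$. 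Your proof cannot be completed until you commit to one of these readings and, in either case, actually supply the lexicographic-to-numerical transfer argument rather than gesturing at it.
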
   
    
\paragraph{{\bf Upper bound}} 
   We consider a interval $(\beta_0,\beta_1)$, where $1<\beta_0<\beta_1$. For $v\in(0,\infty)$ and $\hat{v}\in(0,1)$, let $$\mathcal{L}_{v,\hat{v}}:=\left\{\beta>1:\mathcal{V}_\beta(1,x_0)=v\right\}\cap\left\{\beta>1:\hat{\mathcal{V}}_\beta(1,x_0)=\hat{v}\right\},$$ $$\mathcal{L}_{v,\hat{v}}(\beta_0,\beta_1):=\left\{\beta\in(\beta_0,\beta_1):\mathcal{V}_\beta(1,x_0)=v\right\}\cap\left\{\beta\in(\beta_0,\beta_1):\hat{\mathcal{V}}_\beta(1,x_0)=\hat{v}\right\}.$$ By Theorem \ref{adone}, each self-admissible sequence corresponds to a real number $\beta>1$. Assume that $\mathcal{S}_{\beta_1}$ is the set of all self-admissible sequences in $\Sigma_{\beta_1}$ and $\pi_{\beta_1}$ is the natural projection from the $\beta$-shift to interval $[0,1]$. Thus, there exists a one-to-one map $$\rho_{\beta_1}:\pi_{\beta_1}(\mathcal{S}_{\beta_1})\rightarrow(1,\beta_1).$$
   
   Define the subset $\mathcal{D}_{v,\hat{v}}$ of $\Sigma_{\beta_1}$ as $$\pi^{-1}_{\beta_1}\left(\left\{x\in[0,1]:\mathcal{V}_\beta(x,x_0)= v\right\}\cap\left\{x\in[0,1]:\hat{\mathcal{V}}_\beta(x,x_0)=\hat{v}\right\}\right).$$ The H\"{o}lder exponent of the restriction of the map $\rho_{\beta_1}$ on $\pi_{\beta_1}(\mathcal{S}_{\beta_1}\cap\mathcal{D}_{v,\hat{v}})$ equals $\log\beta_0/\log\beta_1$. Since $\mathcal{L}_{v,\hat{v}}(\beta_0,\beta_1)\subseteq\rho_{\beta_1}(\pi_{\beta_1}(\mathcal{S}_{\beta_1}\cap\mathcal{D}_{v,\hat{v}}))$, $${\rm dim}_H\mathcal{L}_{v,\hat{v}}(\beta_0,\beta_1)\leq{\rm dim}_H\rho_{\beta_1}(\pi_{\beta_1}(\mathcal{S}_{\beta_1}\cap\mathcal{D}_{v,\hat{v}}))\leq\dfrac{\log\beta_1}{\log\beta_0}{\rm dim}_H\pi_{\beta_1}(\mathcal{S}_{\beta_1}\cap\mathcal{D}_{v,\hat{v}}).$$ By Theorem \ref{A}, letting $\beta_1$ tend to $\beta_0$, if $v<\hat{v}/(1-\hat{v})$, the set $\mathcal{L}_{v,\hat{v}}(\beta_0,\beta_1)$ is empty. Otherwise, $${\rm dim}_H\mathcal{L}_{v,\hat{v}}(\beta_0,\beta_1)\leq\dfrac{v-\hat{v}-v\hat{v}}{(1+v)(v-\hat{v})}.$$     
  
\paragraph{{\bf Lower bound}} 
   Take $\beta_2$ with $1<\beta_0<\beta_1<\beta_2$ such that the $\beta_2$-expansion of $1$ ends with zeros. Thus, the $\beta$-shift $\sum_{\beta_2}$ is a subshift of finite type. Bugeaud and Liao gave a way to calculate the lower bound of the Hausdorff dimension of $\mathcal{L}_{v,\hat{v}}(\beta_0,\beta_1)$.
\begin{Theorem}(\cite[Theorem 5.1]{YLiao2016})\label{lowb}
   Given real numbers $1<\beta_0<\beta_1<\beta_2$. For any $v\in(0,\infty)$ and any $\hat{v}\in(0,1)$ with $v\geq\hat{v}/(1-\hat{v})$, one has $${\rm dim}_H\rho^{-1}_{\beta_2}\mathcal{L}_{v,\hat{v}}(\beta_0,\beta_1)\geq\dfrac{v-\hat{v}-v\cdot\hat{v}}{(1+v)(v-\hat{v})}\cdot\dfrac{\log\beta_1}{\log\beta_2}.$$	
\end{Theorem}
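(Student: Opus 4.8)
The plan is to transport the Cantor-set construction from the lower bound of Theorem \ref{A} into the symbolic space of the auxiliary subshift of finite type $\Sigma_{\beta_2}$. By Theorem \ref{adone}, a sequence is self-admissible if and only if it is the expansion $d_\beta(1)$ of some $\beta>1$, and this $\beta$ is unique since $\beta\mapsto d_\beta(1)$ is increasing; hence it suffices to construct a set $F\subseteq\rho^{-1}_{\beta_2}\mathcal{L}_{v,\hat v}(\beta_0,\beta_1)$ whose points have $\beta_2$-expansions that are self-admissible with associated parameter $\beta\in(\beta_0,\beta_1)$ satisfying $\mathcal{V}_\beta(1,x_0)=v$ and $\hat{\mathcal{V}}_\beta(1,x_0)=\hat v$, and then to bound ${\rm dim}_H F$ from below in the $\beta_2$-cylinder metric. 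Since $d_{\beta_2}(1)$ ends in zeros, $\Sigma_{\beta_2}$ is a subshift of finite type, so full cylinders are abundant and self-admissibility can be monitored on bounded windows; this is what makes the construction run.

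First I would fix sequences $\{n_k\}$ and $\{m_k\}$ with $n_k<m_k<n_{k+1}$ realizing the asymptotics (\ref{lowle}), which the hypothesis $v\geq\hat v/(1-\hat v)$ makes possible. The essential new point, compared with Theorem \ref{A}, is that the target expansion $d_\beta(x_0)$ now \emph{varies} with $\beta$, so one cannot simply plant a fixed block; for $x_0=0$ in \cite{YLiao2016} this difficulty is absent, since one plants zeros. The device that replaces planting is the identity $$T^{n_k}_\beta 1=\beta^{n_k}-\varepsilon_1\beta^{n_k-1}-\cdots-\varepsilon_{n_k},$$ valid for every $\beta$ whose expansion of $1$ begins with $(\varepsilon_1,\dots,\varepsilon_{n_k})$. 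For such a fixed prefix $u$ the right-hand side is an increasing polynomial in $\beta$, and as $\beta$ ranges over the parameter interval on which $d_\beta(1)$ begins with $u$ (a full prefix), $T^{n_k}_\beta 1$ sweeps all of $[0,1)$. Hence by the intermediate value theorem there is a parameter with $T^{n_k}_\beta 1=x_0$, and, since the derivative of this polynomial is of order $n_k\beta^{n_k-1}$, the inequality $|T^{n_k}_\beta 1-x_0|<\beta^{-(m_k-n_k)}$ holds on a sub-interval of length of order $\beta^{-m_k}$. Selecting this sub-interval amounts to prescribing the digits $\varepsilon_{n_k+1},\dots,\varepsilon_{m_k}$ of $d_\beta(1)$, and this is the parameter analogue of the planted block in Theorem \ref{A}. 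Between $m_k$ and $n_{k+1}$ I would insert the marker blocks $0^N10^N$ used there to kill unintended closer returns, and fill the remaining free positions with $\beta_1$-admissible blocks, all arranged (using a fixed safe separator available from the SFT structure) to keep the whole sequence self-admissible and its parameter in $(\beta_0,\beta_1)$.

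The asymptotics of $\{n_k\},\{m_k\}$ then give $\mathcal{V}_\beta(1,x_0)=v$ and $\hat{\mathcal{V}}_\beta(1,x_0)=\hat v$ for every $\beta$ arising from $F$, exactly as in the verification of (\ref{lowle}). I would next place a Bernoulli-type measure $\mu$ on $F$ by spreading mass uniformly over the free $\beta_1$-admissible blocks and leaving it constant across the forced digits and markers, in direct analogy with the measure built for Theorem \ref{A}. A Stolz--Ces\`aro computation identical to Case $A$ there shows that the proportion of free digits among the first $n$ symbols tends to $\frac{v-\hat v-v\hat v}{(1+v)(v-\hat v)}$; each free digit carries entropy $\log\beta_1$, while the $\beta_2$-cylinder containing a point of $F$ has length of order $\beta_2^{-n}$. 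Thus the local dimension of $\mu$ equals $\frac{v-\hat v-v\hat v}{(1+v)(v-\hat v)}\cdot\frac{\log\beta_1}{\log\beta_2}$, and the mass distribution principle (\cite[p.p.~60]{FK90}) yields the claimed lower bound.

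The hard part is the interaction of three competing requirements on a single sequence: self-admissibility (so that it is genuinely some $d_\beta(1)$), the sweeping argument that pins the orbit of $1$ near $x_0$ at the scales $n_k,m_k$, and the retention of enough free $\beta_1$-admissible choices to realise the full exponent $\log\beta_1/\log\beta_2$. The $\beta$-dependence of $d_\beta(x_0)$ is the genuinely new obstacle relative to \cite{YLiao2016}; the polynomial identity for $T^{n_k}_\beta 1$ together with the fullness of the chosen prefixes are what dissolve it, but one must check that the selected sub-intervals remain compatible with self-admissibility and do not degrade the counting of free blocks. Once the bounded-window nature of self-admissibility in $\Sigma_{\beta_2}$ is used to show that a fixed separator restores the full richness per free digit — the analogue of the factor $\varphi(N)\to1$ in Theorem \ref{A} — the remaining dimension computation is routine.
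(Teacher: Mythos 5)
You should know at the outset that the paper contains no proof of Theorem \ref{lowb} to compare against: it is imported verbatim as a citation of \cite[Theorem 5.1]{YLiao2016}, and in that paper the statement is proved only for the exponents $v_\beta(1)$, $\hat{v}_\beta(1)$, i.e.\ for the target $x_0=0$, whereas here it is stated for $\mathcal{L}_{v,\hat{v}}(\beta_0,\beta_1)$ defined through $\mathcal{V}_\beta(1,x_0)$ and $\hat{\mathcal{V}}_\beta(1,x_0)$ with arbitrary $x_0$. Your proposal therefore attempts exactly the generalization that the citation leaves open, and your route is the natural (indeed the only available) one, matching how the cited source handles its special case: build a Cantor set of self-admissible sequences inside the SFT $\Sigma_{\beta_2}$, identify them with parameters via Theorem \ref{adone} and monotonicity of $\beta\mapsto d_\beta(1)$, and run the Bernoulli-measure and mass-distribution computation in the $\beta_2$-cylinder metric, which correctly produces the factor $\log\beta_1/\log\beta_2$. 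Your identification of the $\beta$-dependence of $d_\beta(x_0)$ as the genuinely new obstacle, and of the intermediate-value/sweeping argument on parameter cylinders (with $T^n_\beta 1$ an increasing polynomial of $\beta$ on each cylinder, with derivative of size $\approx\beta^{n-1}$) as its resolution, is exactly the right new ingredient; this is the mechanism of \cite{LPWW} and \cite{PS08} for hitting targets in parameter space.

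That said, three steps are glossed in a way that matters. First, the sweep of a parameter cylinder is $[T^n_{\beta_\ell}1,1)$, where $\beta_\ell$ is its left endpoint; it covers a neighborhood of an arbitrary $x_0$ only when the cylinder is full (i.e.\ $\beta_\ell$ is the simple Parry number whose expansion of $1$ is the chosen prefix), so fullness must be maintained inductively through every stage, not just assumed at the first one. Second, the free blocks cannot literally be $\beta_1$-admissible: self-admissibility of the whole sequence forces them to be $\beta''$-admissible for some $\beta''$ strictly below the constructed parameter $\beta<\beta_1$, with the initial prefix taken long enough to separate $d_\beta(1)$ from the quasi-greedy expansion of $1$ in base $\beta''$; a ``fixed safe separator'' placed before a block cannot repair a tail that is lexicographically too large starting at the block itself. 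One gets $\log\beta''$ per free digit and must finish with a limit $\beta''\uparrow\beta_1$. Third, and most substantively, the markers $0^N10^N$ do not by themselves ``kill unintended closer returns'' once $x_0\neq 0$: for $x_0=0$ a single planted $1$ breaks any run of zeros, but here a free block may contain a long prefix of $d_\beta(x_0)$, and a match may even straddle a marker if $d_\beta(x_0)$ happens to contain the marker pattern; such accidental approximations could push $\mathcal{V}_\beta(1,x_0)$ or $\hat{\mathcal{V}}_\beta(1,x_0)$ strictly above $v$ or $\hat{v}$, destroying membership in the level set. The standard repair is to restrict the free blocks to those containing no long prefix of $d_\beta(x_0)$ (an entropy loss that can be made negligible) and to carry this restriction through the counting. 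The paper's own proof of Theorem \ref{A} glosses the same point, but since the level sets must be hit exactly, your write-up should make this step explicit.
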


   From Theorem \ref{lowb} and Persson and Schmeling \cite[Theorem 14]{PS08}, we have $${\rm dim}_H\mathcal{L}_{v,\hat{v}}(\beta_0,\beta_1)\geq\dfrac{v-\hat{v}-v\cdot\hat{v}}{(1+v)(v-\hat{v})}\cdot\dfrac{\log\beta_1}{\log\beta_2}.$$ Letting $\beta_2$ tend to $\beta_1$, we obtain the lower bound. Thus, we complete the proof of Theorem \ref{C}.
   
   For the proof of Theorem \ref{D}, one can follow from the proof of Theorem \ref{B}. We omit the details.


\begin{thebibliography}{99}

\bibitem{AB10} M. Amou and Y. Bugeaud, Exponents of Diophantine approximation and expansions in integer bases, {\it J. Lond. Math. Soc. (2)}, {\bf 81} (2010), no. 2, 297-316.

\bibitem{B34} A. Besicovitch, Sets of Fractional Dimensions (IV): On Rational Approximation to Real Numbers, {\it J. London Math. Soc.}, {\bf 9} (1934), no. 2, 126-131. 

\bibitem{B2012} Y. Bugeaud, Distribution modulo one and Diophantine approximation, Cambridge Tracts in Mathematics, vol. 193, Cambridge University Press, Cambridge, 2012. 


\bibitem{YLiao2016} Y. Bugeaud and L. Liao, Uniform Diophantine approximation related to $b$-ary and $\beta$-expansions, {\it Ergodic Theory Dynam. Systems} {\bf 36} (2016), no. 1, 1-22.

\bibitem{D1842} L. Dirichlet, Verallgemeinerung eines Satzes aus der Lehre von den Kettenbr{\"u}chen nebst einigen Anwendungen auf die Theorie der Zahlen, {\it SB Preuss. Akad. Wiss} {\bf 1842} (1842), 93-95.

\bibitem{FK90} K. Falconer, Fractal geometry, {\it John Wiley and Sons, Ltd.}, Chichester, 1990, Mathematical foundations and applications.

\bibitem{FW2012} A. Fan and B. Wang, On the lengths of basic intervals in beta expansions, {\it Nonlinearity} {\bf 25} (2012), no. 5, 1329-1343.

\bibitem{HV1995} R. Hill and S. Velani, The ergodic theory of shrinking targets, {\it Invent. Math.} {\bf 119} (1995), no. 1, 175-198.

\bibitem{J29} V. Jarn\'{\i}k, Diophantische approximationen und hausdorffsches mass, {\it Rec. Math. Moscou} {\bf 36} (1929), 371-382.

\bibitem{K1924} A. Khintchine, Einige {S}\"{a}tze \"{u}ber {K}ettenbr\"{u}che, mit {A}nwendungen auf die
{T}heorie der {D}iophantischen {A}pproximationen, {\it Math. Ann.} {\bf 92} (1924), no. 1-2, 115-125.

\bibitem{L2009} A. Legendre, Essai sur la th\'{e}orie des nombres, Cambridge Library Collection, Cambridge University Press, Cambridge, 2009, Reprint of the second (1808) edition.

\bibitem{LPWW} B. Li, T. Persson, B. Wang, and J. Wu, Diophantine approximation of the orbit of $1$ in the dynamical system of beta expansions, {\it Math. Z.} {\bf 276} (2014), no. 3-4, 799-827.

\bibitem{P1960} W. Parry, On the $\beta$-expansions of real numbers, {\it Acta Math. Acad. Sci. Hungar.} {\bf 11} (1960), 401-416.

\bibitem{PS08} T. Persson and J. Schmeling, Dyadic Diophantine approximation and Katok's horseshoe approximation, {\it Acta Arith.} {\bf 132} (2008), no. 3, 205-230.

\bibitem{P67} W. Philipp, Some metrical theorems in number theory, {\it Pacific J. Math.} {\bf 20} (1967), 109-127.

\bibitem{Ren57} A. R\'{e}nyi, Representations for real numbers and their ergodic properties, {\it Acta Math. Acad. Sci. Hungar} {\bf 8} (1957), 477-493.

\bibitem{SW2013} L. Shen and B. Wang, Shrinking target problems for beta-dynamical system, {\it Sci. China Math.} {\bf 56} (2013), no. 1, 91-104.

\bibitem{W12} M. Waldschmidt, Recent advances in Diophantine approximation, Number theory, analysis and geometry, Springer, New York, 2012, pp. 659-704.


	
  
\end{thebibliography}

\end{document}